\newcommand{\A}{\mathcal{A}}
\newcommand{\alt}{\mathrm{A}}
\newcommand{\floor}[1]{{\lfloor #1\rfloor}}
\newcommand{\eps}{\varepsilon}
\newcommand{\PP}{\mathcal{P}}
\newcommand{\sym}{\mathrm{S}}
\renewcommand{\le}{\leqslant}
\renewcommand{\ge}{\geqslant}
\renewcommand{\leq}{\leqslant}
\renewcommand{\geq}{\geqslant}
\setlist[enumerate]{label=\rm{(\alph*)}}
\theoremstyle{definition}
\newtheorem{definition}{Definition}
\newtheorem{remark}[definition]{Remark}
\theoremstyle{plain}
\newtheorem{theorem}[definition]{Theorem}
\newtheorem{proposition}[definition]{Proposition}
\newtheorem{lemma}[definition]{Lemma}
\newtheorem{corollary}[definition]{Corollary}
\begin{document}

\author{S.\,P. Glasby}
\author{Cheryl E. Praeger}
\author{W.\,R. Unger}
\address{\phantom{|}\kern-1cm S.P. Glasby and Cheryl E. Praeger, Centre for the Mathematics of Symmetry and Computation, University of Western Australia, 35 Stirling Highway, Perth 6009, Australia. \emph{E-mail addresses:} {\tt\texttt{\{stephen.glasby, cheryl.praeger\}@uwa.edu.au}}\vskip3mm\noindent
  W.\,R. Unger, School of Mathematics and Statistics, University of Sydney, NSW 2006, Australia. \emph{E-mail address:} {\tt\texttt{william.unger@sydney.edu.au}}
}

\thanks{Acknowledgements: SPG and CEP gratefully acknowledge support from the Australian Research Council (ARC) Discovery Project DP190100450,
  and WRU gratefully acknowledges the support from ARC project DP160104626.\newline
  2010 Math Subject Classification: 05A05, 68Q17, 11N05.\hfill
 Date: \today.
}
%05A05 Permutations, words, matrices;
%68Q17 Computational difficulty of problems; 11N05 Distribution of primes

% Submit to:
% JLMS (rejected)
%Combinatorics, Probability and Computing (under review Feb; rejected Mar)
%Journal of Combinatorial Algebra (to Helfgott because of # thy and gp thy)
%Proceedings of the Edinburgh Mathematical Society (to Martyn or Colva)
%Proceedings Of The American Mathematical
 
\title[Most permutations power to a cycle of small prime length]
      {Most permutations power to \\a cycle of small prime length} %\texorpdfstring{$p$}{p}

%\subjclass[2010]{}
\date{\today}

\begin{abstract}
  We prove that most permutations of degree $n$ have some power which
  is a cycle of~prime 
  length approximately $\log n$. Explicitly, we show that for
  $n$ sufficiently large, the proportion of such elements is 
  at least  $1-5/\log\log n$ with the prime between  $\log n$
  and $(\log n)^{\log\log n}$. The proportion of even permutations with
  this property is at least $1-7/\log\log n$.
%-----------------------------------------------------------------------------
%  \noindent{\sc Abstract.}
%  We prove that most permutations of a set of size $n$ power to a cycle of
%  prime length approximately $\log n$. Explicitly, we show that  for
%  $n> 163\,000$, the proportion of such elements is 
%  at least  $1-5/\log\log n$ with the prime between  $\log n$
%  and $(\log n)^{\log\log n}$. The same is true for even permutations with
%  the  constant $5$ replaced by $10$. In addition, for  $n\geq 3308$,
%  we show that at least half of the permutations on $n$ points power to 
%  a cycle of prime  length between
%  $5\log n$ and $(5\log n)^{(\log\log n)^3}$.
%-----------------------------------------------------------------------------  
%  Let $G$ be the symmetric group $\sym_n$, or
%  the alternating group $\alt_n$, of degree~$n$.  We prove that for
%  $n>e^{12}\approx163\,000$ the proportion  $\rho$ of elements of~$G$ that
%  power to a cycle of prime length~$p$ satisfying
%  $\log n< p<(\log n)^{\log\log n}$  is at least $1-10/\log\log n$.
%  Furthermore, if $n\ge 3308$ and the prime $p$ lies between
%  $5\log n$ and $(5\log n)^{(\log\log n)^3}$, then $\rho\ge 1/2$.
\end{abstract}

\maketitle

\section{Introduction}\label{S:intro}

The symmetric and the alternating groups $\sym_n$ and $\alt_n$ of degree~$n$,
have been viewed as probability spaces with the uniform distribution since the
seminal work of Gruder~\cite{G}, Gon\v{c}arov~\cite{Gon} and
Erd\H{o}s-Tur\'an~\cite{ET2}. There is an analogy between the disjoint
cycle decomposition of a permutation and the prime factorization of an integer
whereby the cycles correspond to prime numbers (see~\cite{aG} or~\cite{Ford}
for a description). The  probability that
a permutation $g\in\sym_n$ is an $n$-cycle is $1/n$, while the probability that
a number $p\le x$ is prime is $1/\log x$, and according to this analogy, $n$ 
corresponds to $\log x$. Counting integers without small prime factors is like counting
permutations without small cycles: both lead to limits reminiscent of 
Mertens' third theorem: 
\[
\lim_{n\to\infty}\left( \log n\prod_{p\le n}\left(1-\frac{1}{p}\right)\right)=e^{-\gamma},\quad \mbox{where $\gamma=0.5772\cdots$} 
\]
is the Euler--Mascheroni constant. Statistical methods have been invaluable for proving
theoretical results. For example, the number of cycles of
a uniformly distributed random element $g\in\sym_n$ (or $g\in\alt_n$)
behaves as $n\to\infty$ like a normal distribution $N(\mu,\sigma^2)$
(\emph{c.f.}~\cite{GS}*{Theorem~1}) with mean and variance
$\mu=\sigma^2=\log n$, see~\cites{Gon,PR}.
This paper focuses on proving the following theorem.
We abbreviate $\log(n)$ and $\log(\log(n))$ as $\log n$ and
$\log\log n$, respectively. All logarithms are to the natural
base $e=2.718\cdots$.

\begin{theorem}\label{T:one}
  Suppose that $G\in\{\alt_n,\sym_n\}$.
  Let $\rho_G$ be the proportion of permutations in $G$ for which
  some power is a cycle of prime length~$p$, where $p$
  lies in the open interval $(\log n,  (\log n)^{\log\log n})$.
  Then, for $n$ sufficiently large,
  $\rho_G \ge 1 - c/\log\log n$ where $c=5$ when $G=\sym_n$, and $c=7$
  when $G=\alt_n$.
\end{theorem}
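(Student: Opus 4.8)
The plan is to reduce Theorem~\ref{T:one} to an estimate on the multiset of cycle lengths of a uniform random $g\in G$, and then to control the relevant probabilities using the classical (Gon\v{c}arov) factorial-moment identities for cycle counts together with Mertens' theorem. Write $C_\ell=C_\ell(g)$ for the number of $\ell$-cycles of $g$. The first step is a purely group-theoretic observation: for a prime $p\le n$, a permutation $g\in\sym_n$ has a power equal to a $p$-cycle if and only if $C_p=1$ and $C_{kp}=0$ for all $k\ge2$. Indeed a single cycle of length $\ell$ powers to a single $p$-cycle only when the exponent is coprime to $p$ and $\ell=p$, which forces every other cycle length to divide the exponent and hence to be coprime to $p$; conversely, if $C_p=1$ and no proper multiple of $p$ occurs among the cycle lengths, then $g^m$ is a $p$-cycle for $m$ the least common multiple of the lengths of the cycles of $g$ other than the distinguished one (this $m$ is coprime to $p$). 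Letting $\PP$ be the set of primes in the open interval $(\log n,(\log n)^{\log\log n})$, the elements of $G$ failing the conclusion form the set
\[
 \mathcal B=\bigl\{g : \text{for every }p\in\PP,\ C_p\ne1\ \text{or}\ \textstyle\sum_{k\ge2}C_{kp}\ge1\bigr\},
\]
so it suffices to prove $\Pr_G(\mathcal B)\le c/\log\log n$ for $n$ large.

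Next I would restrict to a sub-family of primes to kill the ``companion-cycle'' nuisance. Since $(\log n)^{\log\log n}=e^{(\log\log n)^2}=n^{o(1)}$, every prime in $\PP$ is $n^{o(1)}$; put $\PP'=\{p\in\PP: p>(\log n)(\log\log n)^2\}$. By Mertens' theorem $\sum_{p\le x}1/p=\log\log x+M+O(1/\log x)$, and since $\log\bigl((\log n)^{\log\log n}\bigr)=(\log\log n)^2$ while $\log\bigl((\log n)(\log\log n)^2\bigr)=\log\log n+o(\log\log n)$, one gets $\sum_{p\in\PP'}1/p=2\log\log\log n-\log\log\log n+o(1)=\log\log\log n+o(1)$. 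Let $\mathcal N$ be the event that $g$ has no cycle of length in $\PP'$. If $g\in\mathcal B$, $g\notin\mathcal N$, and no $p\in\PP'$ has $C_p\ge2$, then some $p\in\PP'$ has $C_p=1$ and hence $\sum_{k\ge2}C_{kp}\ge1$; therefore
\[
 \mathcal B\ \subseteq\ \mathcal N\ \cup\ \bigl\{\exists\,p\in\PP':\ C_p\ge1,\ \textstyle\sum_{k\ge2}C_{kp}\ge1\bigr\}\ \cup\ \bigl\{\exists\,p\in\PP':\ C_p\ge2\bigr\}.
\]

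Now I would bound the three pieces for $\sym_n$. The factorial-moment identities give $\Pr_{\sym_n}(C_p\ge2)\le\tfrac12 p^{-2}$ and $\Pr_{\sym_n}(C_p\ge1,\ C_{kp}\ge1)\le \mathbb E[C_pC_{kp}]\le (kp^{2})^{-1}$ for $k\ge2$; summing the latter over $2\le k\le n/p$ gives at most $(\log n)p^{-2}$, so the last two events in the display have combined probability at most $\sum_{p\in\PP'}(1+\log n)p^{-2}\le (1+\log n)\bigl((\log n)(\log\log n)^2\bigr)^{-1}=O\bigl((\log\log n)^{-2}\bigr)$ — this is the payoff of the cutoff defining $\PP'$. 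For $\mathcal N$, the inclusion–exclusion over which forbidden cycles occur shows $\Pr_{\sym_n}(\mathcal N)=\prod_{p\in\PP'}e^{-1/p}+(\text{super-polynomially small error})$, because every truncated term (with cycle contribution exceeding $n$) carries at least $n^{1-o(1)}$ factors and hence is negligible; using all primes in $\PP'$ are $n^{o(1)}$, this yields $\Pr_{\sym_n}(\mathcal N)=\exp\bigl(-\sum_{p\in\PP'}1/p\bigr)(1+o(1))=(1+o(1))/\log\log n$. Combining the three bounds, $\Pr_{\sym_n}(\mathcal B)\le(1+o(1))/\log\log n$, which for $n$ large is below $5/\log\log n$.

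For $G=\alt_n$ I would pass through parity: $\Pr_{\alt_n}(\mathcal B)=2\Pr_{\sym_n}(\mathcal B\cap\alt_n)$, and since $\mathcal B$ is determined by the short-cycle behaviour of $g$ — which is essentially decoupled from the sign of $g$ — one has $\Pr_{\sym_n}(\mathcal B\cap\alt_n)=(\tfrac12+o(1))\Pr_{\sym_n}(\mathcal B)$, hence $\Pr_{\alt_n}(\mathcal B)=(1+o(1))\Pr_{\sym_n}(\mathcal B)$; the extra slack in the constant ($c=7$ rather than $5$) absorbs the parity correction and its error terms. The strategy is robust, but the real obstacle is making every estimate effective enough to land on the explicit constants and on a usable threshold for ``$n$ sufficiently large'': one needs a quantitative form of $\Pr_{\sym_n}(\text{no cycle of length in }T)\le(1+o(1))\exp(-\sum_{t\in T}1/t)$ uniform over $T\subseteq[1,n^{o(1)}]$, an explicit error term in Mertens' sum and in the resulting value of $\sum_{p\in\PP'}1/p$, and an explicit version of the parity equidistribution on $\alt_n$; carrying all of these through is the technical heart of the proof.
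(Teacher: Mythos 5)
Your argument is correct in outline and would prove the theorem, but it takes a genuinely different route from the paper in two respects. First, your cutoff $\PP'=\{p\in\PP: p>(\log n)(\log\log n)^2\}$ is a real structural difference: discarding the small primes of $\PP$ changes $\mu=\sum_{p}1/p$ only by $o(1)$, yet it turns the ``companion-cycle'' contribution $\sum_{p}(\log n)/p^2$ from $\Theta(1/\log\log n)$ (which is what the paper gets with $a=\log n$, via Lemma~\ref{L2}, and which feeds into its constants $4.6$ and $6.9$) into $O(1/(\log\log n)^2)$, i.e.\ into a negligible error term. Together with the sharp Poisson asymptotic $\textup{Prob}(\mathcal N)=(1+o(1))e^{-\mu}$ this gives $c=1+o(1)$ for $\sym_n$ and, by the crude index-two transfer $|\mathcal B\cap\alt_n|/|\alt_n|\le 2|\mathcal B|/|\sym_n|$ (your refined parity claim is not needed), $c=2+o(1)$ for $\alt_n$ --- asymptotically better than the paper. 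Your second-moment bounds $\mathbb E[C_pC_{kp}]\le (kp^2)^{-1}$ and $\textup{Prob}(C_p\ge 2)\le \tfrac12 p^{-2}$ are the probabilistic form of the paper's explicit overcount of products $g_1g_2g_3$ in Proposition~\ref{P9}, and working them entirely inside $\sym_n$ lets you skip the generic/special parity case analysis the paper needs for $\alt_n$. What the paper's route buys in exchange is effectivity: Proposition~\ref{T1}, Lemma~\ref{L2} and Lemma~\ref{L3} give explicit constants and an explicit threshold $n\ge e^{12}$, which the later results (Theorem~\ref{T9}, Remarks~\ref{R1} and~\ref{R2}, and the algorithmic application) require, whereas your estimates are all of the form $1+o(1)$ with unspecified thresholds. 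The one ingredient you use but do not prove is the uniform statement $\textup{Prob}(\text{no cycle length in }T)\le(1+o(1))\exp(-\sum_{t\in T}1/t)$ for $T\subseteq[1,n^{o(1)}]$; this is true (Arratia--Tavar\'e, or a tail estimate on the generating function $\frac{1}{1-z}\exp(-\sum_{a\in T}z^a/a)$) but is exactly the technical heart you flag. Note that the paper's own Proposition~\ref{T1}, giving $\rho<e^{\gamma-\mu}$, can be substituted for it with no further work: your decomposition then yields $c=e^{\gamma}+o(1)\approx 1.79$ for $\sym_n$ and $2e^{\gamma}+o(1)\approx 3.57$ for $\alt_n$, still comfortably below $5$ and $7$. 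So your proof closes completely once that one lemma is either cited or replaced by Proposition~\ref{T1}.
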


%\begin{theorem}\label{T:one}
%  Suppose that $n\geq e^{12}$ and $\alt_n\le G\le\sym_n$.
%  Then the proportion $\rho_G$ of permutations in $G$
%  that power to a cycle with prime length $p$, where $p$
%  lies in the open interval $(\log n,  (\log n)^{\log\log n})$,
%  satisfies   $\rho_G \ge 1 - 7/\log\log n$.
%\end{theorem}

%\begin{theorem}\label{T:two}
%  Let $n$ be an integer such that $n \ge 3308$. Then with probability
%  at least $1/2$ a uniformly distributed random element of $\sym_n$
%  (or of $\alt_n$)  powers to a cycle of prime length in the open interval
%  $(5\log n,  (5\log n)^{(\log\log n)^3})$.
%\end{theorem}

A permutation $g\in \sym_n$ having a single cycle of length greater than 1
is called a $k$-cycle, where the cycle has length $k$. Their theoretical
importance has long been recognised:  the
presence of a $k$-cycle in a primitive subgroup $G$ of $\sym_n$ was shown to imply 
 that $G$ is $\alt_n$ or $\sym_n$ by Jordan in 1873 in the case where
$k$ is prime and $k\le n-3$ (\cite{Jordan}, 
or see \cite{Wielandt}*{Theorem 13.9}). The same conclusion also holds for 
arbitrary $k < n/2$ by a result in Marggraff's
dissertation~\cite{Marggraff}, see also~\cite{J}*{Corollary~1.3},
\cite{Wielandt}*{Theorem 13.5} and P. M. Neumann's Mathematical
Review MR0424912.
%Jones~\cite{J}*{Corollary~1.3} used the simple group classification to
%generalise Jordan's result. 

To use Jordan's result for deciding whether a given primitive subgroup $G$
of $\sym_n$ is indeed $\alt_n$ or $\sym_n$, one needs to locate a $p$-cycle
with $p$ a prime, say by choosing random elements from $G$. It is
inefficient to do this directly since the proportion of such elements in
$\alt_n$ or $\sym_n$ is $O(n^{-1})$ (see Section~\ref{ss:recog}). Instead one
searches for a `pre-$p$-cycle', a permutation which powers to a $p$-cycle.
It is shown in~\cite{Seress}*{Lemma~10.2.3} that the proportion of elements
in $\alt_n$ or $\sym_n$ that power to a $p$-cycle
with $n/2<p\le n-3$ is asymptotically $\log 2/\log n$, while an application of the main result 
Theorem~1 of~\cite{BGHP} shows that considering only pre-$p$-cycles 
with $p$ bounded, say $p\leq m$, produces a proportion $c(m)/n^{1/m}$. 
Thus to approach Seress's asymptotic proportion the primes $p$ must be 
allowed to grow unboundedly with $n$.

Quite decisively, and perhaps surprisingly, the third author recently 
showed~\cite{U1}*{Theorem 2} that \emph{almost all} permutations
in $G\in\{\alt_n,\sym_n\}$ are pre-$p$-cycles for some prime~$p$.
The purpose of this paper
is to prove an even stronger statement: namely that, asymptotically, almost
all permutations in $G$ power to a $p$-cycle where the
prime~$p$ is roughly $\log n$ (Theorem~\ref{T:one}) and we
  may derive, from Proposition~\ref{T1} and the proof of Theorem~\ref{T:one},
  explicit values for $n_G$ such that the bounds of Theorem~\ref{T:one}
  hold for all $n\geq n_G$.
On the one hand we  quantify the asymptotic results~\cite{U1}, 
giving a precise analysis with explicit bounds rather than asymptotics. 
In addition we show (which came as a surprise to the authors) that the prime $p$ can be
chosen in a very small interval of length logarithmic in $n$.
Theorem~\ref{T:one} relies on Proposition~\ref{T1} which strengthens a key technical
result~\cite{ET2}*{Theorem VI} of Erd\H{o}s and Tur\'an. Theorem~\ref{T9}
shows that the proportion of pre-$p$-cycles in $G$ is at
least $1-c'\log\log n/\log(n-3)$ for $p$ in the range
$2\le p\le n-3$. We prove in
Remark~\ref{R2} that for all $n\ge5$, the proportion of pre-$p$-cycles
in $\sym_n$ with $2\le p\le n-3$ is at least $1/19$.
In the remainder of this section we comment on uses and proofs of these results.

%\cmt{Look up another result about $k\le n/2$ giving the same outcome?}

%In 1909 Landau showed that the maximum order of $g\in\sym_n$ is
%$e^{\sqrt{n\log n}(1+o(1))}$.  The expected order of a random
%$g\in\Sym_n$ is described in~\cite{GS} by Goh and Schmutz. Coprime
%actions are important in group theory and in computational group
%theory~\cite{BGHP}. We will study certain permutations whose cycle
%lengths are coprime.

\subsection{Recognising finite symmetric and alternating groups}\label{ss:recog}

We call a permutation $g\in\sym_n$ a
\emph{pre-$k$-cycle} if it powers to a $k$-cycle where $k>1$. For example,
$g=(1,2,3,4)(5,6)(7,8,9)$ is a pre-3-cycle as $g^4=(7,8,9)$, but $g$
is not a pre-2-cycle. A simple argument (Lemma~\ref{L0}) shows
that the disjoint cycle decomposition of a pre-$k$-cycle contains exactly one
$k$-cycle and all other cycles (if any) have lengths coprime to~$k$.
The number of $k$-cycles in $\sym_n$ equals $n!/c(k)$ where
$c(k)=k(n-k)!$. Hence the proportion $\rho_n$ of cycles in $\sym_n$ equals
$\sum_{k=2}^n 1/c(k) = \sum_{j=0}^{n-2}\frac{1}{(n-j)j!}$. For $n$ even,
%As $n\to\infty$ we have
%\[
%\rho_n-\frac{1}{n}\sum_{j=0}^{n-2}\frac{1}{j!}
%=\sum_{j=0}^{n-2}\left(\frac{1}{n-j}-\frac{1}{n}\right)\frac{1}{j!}
%=\sum_{j=0}^{n-2}\frac{j}{n(n-j)j!}\to0.
%\]
\[
 n\rho_n -\kern-1pt \sum_{j=0}^{n-2} \frac1{j!}
=\sum_{j=0}^{n-2}\frac{j}{(n-j)j!} \kern-1pt=\kern-1pt
\sum_{j=0}^{n/2} \frac{j}{(n-j)j!} +\kern-3pt \sum_{j=n/2}^{n-2} \frac{j}{(n-j)j!} \kern-1pt<\kern-1pt
\frac{2}{n}\sum_{j=0}^{n/2} \frac{j}{j!} +\kern-3pt  \sum_{j=n/2}^\infty \frac{j}{j!}
.
\]
Thus $|n\rho_n-e|\to0$ as $n\to\infty$, and so $\rho_n=O(n^{-1})$.
%$\sum_{k=2}^n 1/c(k)=\sum_{j=0}^{n-2}1/((n-j)j!)$. As $n\to\infty$ we have
%$\sum_{j=0}^{n-2}\left(1/(n-j)-1/n\right)1/j!\to\infty$. Thus
%$|\rho_n-e/n|\to0$ and $\rho_n\to0$ as $n\to\infty$.
By contrast, the proportion of permutations in $\sym_n$ which are
pre-$k$-cycles (for $2\le k\le n$) approaches 1 as $n\to\infty$. 
%for some $k\in\{2,\dots,n\}$ approaches 1 as $n\to\infty$.

Let $X\subseteq \sym_n$ 
and suppose that the subgroup $\langle X\rangle$ of $\sym_n$ generated by $X$ is primitive.
Then a probabilistic algorithm for testing whether
 $\langle X\rangle$  contains $\alt_n$ involves a
random search for a pre-$p$-cycle, where $p$ is a prime less than $n-2$,
and it turns out that such elements have density 1 as $n\to\infty$
(Theorem~\ref{T:one}). If the subgroup
$\langle X\rangle$ does not contain $\alt_n$, we want to limit the (fruitless)
search for pre-$p$-cycles, and for this we need explicit lower bounds on their
density in both $\alt_n$ and $\sym_n$, see Remark~\ref{R1}.

%%%%%%%%%%%%%%%%%%%%%%%%%%%%%%%%
There are two important tools in our proof. The first is to quantify
the sum $\sum_{a<p\le b}1/p^2$ as a function of the real numbers $a$ and $b$.
(Here, and henceforth, $p$ denotes a prime.) The second is to estimate the
number of permutations whose cycle lengths do not have certain
`forbidden lengths'. Erd\H{o}s-Tur\'an~\cite{ET2}*{Theorem~VI} proved that
the probability $\rho$ that the cycle lengths of $g\in\sym_n$ do not lie in
a subset $\A\subseteq\{1,\dots,n\}$ is at most $1/\mu$
where $\mu=\sum_{a\in\A}1/a$. Applying a result of
Ford~\cite{Ford}*{Theorem~2.9} gives the bound $\rho\le e^{1-\mu}$ (take $r=1$,
$T_1=\A$ and $k_1=0$ in Theorem~2.9).
We prove in Proposition~\ref{T1} that $\rho<e^{\gamma-\mu}$,
where $\gamma=0.5772\cdots$. Our bound is less than $2/3$ the size of
bounds of Erd\H{o}s-Tur\'an and Ford. This small improvement is needed
to obtain practically useful bounds in Theorem~\ref{T9}. 
Manstavi\v{c}ius~\cite{M}*{p.\,39} claimed that a slightly weaker bound,
\emph{viz.} $\rho<e^{\gamma-\mu}(1+1/n)$,
followed from a pre-print of his.
Unfortunately, we could not locate his pre-print.

After describing the cycle structure of pre-$k$-cycles
in Section~\ref{S:precycles},
we prove our bound on forbidden cycle lengths in Section~\ref{S:forbidden}.
The sum $\sum_{a<p\le b}1/p^2$ is estimated in Section~\ref{S:estimating},
and the main theorems are proved in Section~\ref{S:Proof}.

\begin{remark}\label{R1}
  Suppose that the proportion of pre-$p$-cycles in $\alt_n$ or $\sym_n$
  is at least $c_0=c_0(n)$.
  Given a primitive permutation group $G\le\sym_n$, then the probability
  that $\alt_n\le G$ and $m$ independent random selections
  from $G$ do \emph{not} find a
  pre-$p$-cycle  for some $p$ with $2\leq p\le n-3$, is at most $(1-c_0)^m$.
  This upper bound is at most a prescribed ``failure probability'' $\eps$
  if and only if  $m\geq \log(\eps)/\log(1-c_0)$. Using Theorem~\ref{T:one}, we can
  take $c_0(n)=1-c/\log\log n$, and a larger value is given in Theorem~\ref{T9}.
  Therefore we may take $c_0$ to be an absolute constant thus requiring
  only  $m=m(\eps)$ random selections. We prove in Remark~\ref{R2} that
  $c_0=0.05$ works for $\sym_n$ and this gives $m(\eps)\ge 20\log(\eps^{-1})$.
  By contrast, the standard analysis of this
  algorithm~\cite{Seress}*{pp.\,226--227} requires
  $m=O(\log(n)\log(\eps^{-1}))$ random selections (because the proportion
  of pre-$p$-cycles in the range $n/2<p<n-2$ is approximately $\log 2/\log n$).
  Thus, our analysis gives an algorithm that is faster by a factor
  of $\log(n)$.
\end{remark}

\section{Precycles}\label{S:precycles}

We begin with some notation. Fix $g\in\sym_n$ and let
$\lambda=\lambda(g)$ be the partition of~$n$
induced by the (disjoint) cycle lengths of $g$. We write $\lambda\vdash n$ and
$\lambda=\langle 1^{m_1}2^{m_2}\cdots n^{m_n}\rangle$ where $m_k$ is
the multiplicity of the part~$k$, so $n=\sum_{k=1}^n km_k$. The vector
$(m_1,\dots,m_n)$ is called the \emph{cycle type} of $g$. A pre-$k$-cycle
$g\in \sym_n$ can be recognized by it \emph{cycle partition} $\lambda(g)$.

\begin{lemma}\label{L0}
  Let $g\in\sym_n$ be a pre-$k$-cycle with
  $\lambda(g)=\langle 1^{m_1}2^{m_2}\cdots n^{m_n}\rangle$.
  Then $m_k=1$ and for $i\ne k$, $m_i>0$ implies $\gcd(k,i)=1$.
  That is, $g$ has a unique $k$-cycle, and its
  other cycles (if any) have lengths  coprime to $k$.
\end{lemma}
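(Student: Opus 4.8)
The plan is to use the elementary description of how a power of a permutation decomposes into cycles. First I would record the standard fact that if $C$ is a cycle of $g$ of length $\ell$ and $t\ge 1$, then $g^t$ restricted to the support of $C$ is a product of $\gcd(\ell,t)$ disjoint cycles, each of length $\ell/\gcd(\ell,t)$; in particular $g^t$ fixes every point in the support of $C$ precisely when $\ell\mid t$. Since distinct cycles of $g$ have disjoint supports, and each such support is $g$-invariant, hence $g^t$-invariant, the nontrivial cycles of $g^t$ are obtained by running this decomposition over each cycle of $g$ separately and collecting the results.

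Next, since $g$ is a pre-$k$-cycle, I would fix $t\ge 1$ with $h:=g^t$ a $k$-cycle. Then $h$ has exactly one nontrivial cycle, and it has length $k$. By the previous paragraph this nontrivial cycle comes from a single cycle $C$ of $g$, say of length $\ell$, and the contribution of $C$ to $h$ must be exactly that one $k$-cycle; thus $\gcd(\ell,t)=1$ and $\ell/\gcd(\ell,t)=k$, forcing $\ell=k$ and $\gcd(k,t)=1$. So $g$ has at least one $k$-cycle, namely $C$, and $t$ is coprime to $k$.

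Finally I would analyse the remaining cycles of $g$. Any cycle of $g$ other than $C$ contributes only fixed points of $h$ (as $h$ has no further nontrivial cycles), so by the first paragraph its length $i$ satisfies $i\mid t$. If such an $i$ equalled $k$ we would get $k\mid t$ together with $\gcd(k,t)=1$ and $k>1$, a contradiction; hence $C$ is the only $k$-cycle of $g$, i.e. $m_k=1$. And if $i\ne k$ with $m_i>0$, then from $i\mid t$ we obtain $\gcd(k,i)\mid\gcd(k,t)=1$, so $\gcd(k,i)=1$, as required. I do not expect a substantial obstacle here: the only point needing care is the bookkeeping in the first two paragraphs, that the nontrivial cycles of $g^t$ are precisely the union over cycles $C$ of $g$ of the cycles of $g^t$ on the support of $C$, and this is immediate from the $g$-invariance of cycle supports.
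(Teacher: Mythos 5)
Your proof is correct and follows essentially the same route as the paper: both decompose $g$ into disjoint cycles, use the standard fact that an $\ell$-cycle raised to the $t$-th power splits into $\gcd(\ell,t)$ cycles of length $\ell/\gcd(\ell,t)$, identify the unique cycle of $g$ producing the $k$-cycle, and deduce that all other cycle lengths divide $t$ and are therefore coprime to $k$. Your explicit handling of the $m_k=1$ claim (via the contradiction $k\mid t$ versus $\gcd(k,t)=1$) is a slightly more careful spelling-out of a point the paper leaves implicit, but the argument is the same.
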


\begin{proof}
  Suppose $g$ has disjoint cycle decomposition $g_1\cdots g_r$
  where $g_i$ is a cycle of length $\lambda_i$ and
  $\lambda_1+\cdots+\lambda_r=n$. Note that
  $g^\ell=g_1^\ell\cdots g_r^\ell$ and
  $g_i^\ell$ is a product of $\gcd(\lambda_i,\ell)$ cycles each of
  length $\lambda_i/\gcd(\lambda_i,\ell)$. Suppose $g^\ell$ is a
  $k$-cycle. Then there exists an $i$ for which
  $\lambda_i/\gcd(\lambda_i,\ell)=k$ and $\gcd(\lambda_i,\ell)=1$.
  Also, $\lambda_j/\gcd(\lambda_j,\ell)=1$ for $j\ne i$.
  Thus $\lambda_i=k$, $\gcd(\lambda_i,\ell)=1$. For $j\ne i$,
  $\lambda_j\mid \ell$. Hence $\gcd(k,\lambda_j)=1$.
\end{proof}

\begin{corollary}\label{C0.5}
  For a prime $p$, the set of pre-$p$-cycles in $X\subseteq\sym_n$ equals
  \[
  \left\{g\in X\bigm| m_p(g)
  =1\textup{ and }\sum_{i\ge2}m_{ip}(g)=0\right\}.
  \]
\end{corollary}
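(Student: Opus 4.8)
The plan is to read off both inclusions of the claimed set equality directly from Lemma~\ref{L0}. For the inclusion $\subseteq$, suppose $g\in X$ is a pre-$p$-cycle. Lemma~\ref{L0} applied with $k=p$ immediately gives $m_p(g)=1$, and it also says that every $i\neq p$ with $m_i(g)>0$ satisfies $\gcd(p,i)=1$; since $p$ is prime this is just the statement that $p\nmid i$. In particular no integer of the form $ip$ with $i\geq 2$ can have positive multiplicity, so $\sum_{i\geq 2}m_{ip}(g)=0$, as required.

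For the reverse inclusion $\supseteq$, I would argue constructively: given $g\in X$ with $m_p(g)=1$ and $\sum_{i\geq 2}m_{ip}(g)=0$, I exhibit an explicit power of $g$ that is a $p$-cycle. Write the disjoint cycle decomposition $g=g_0g_1\cdots g_r$, where $g_0$ is the unique cycle of length $p$ (unique because $m_p(g)=1$) and $g_1,\dots,g_r$ are the remaining cycles, of lengths $\lambda_1,\dots,\lambda_r$. None of these lengths is divisible by $p$: a multiple of $p$ lying in $\{1,\dots,n\}$ is either $p$ itself, which is accounted for by $g_0$, or is $ip$ for some $i\geq 2$, and such lengths are forbidden by $\sum_{i\geq 2}m_{ip}(g)=0$. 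Set $\ell=\operatorname{lcm}(\lambda_1,\dots,\lambda_r)$, with the empty lcm (the case $r=0$) read as $1$. Then $\lambda_j\mid\ell$ for every $j\geq 1$, so $g_j^\ell$ is trivial and $g^\ell=g_0^\ell$; and because $p$ is prime and divides none of $\lambda_1,\dots,\lambda_r$, it does not divide $\ell$, whence $\gcd(p,\ell)=1$ and $g_0^\ell$ is again a $p$-cycle. Therefore $g$ is a pre-$p$-cycle, completing the argument.

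I do not anticipate a genuine obstacle here: once Lemma~\ref{L0} is available the argument is pure bookkeeping. The only point that needs a word of care is the degenerate case in which $g$ is itself a single $p$-cycle, where $r=0$ and the convention $\ell=1$ makes $g^\ell=g$; and the one place primality of $p$ is genuinely used is the step deducing $p\nmid\ell$ from $p\nmid\lambda_j$ for all $j$, which would fail for composite $p$ (for instance $p=6$ with parts $2$ and $3$).
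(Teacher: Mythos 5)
Your proof is correct and follows the route the paper intends: the forward inclusion is exactly Lemma~\ref{L0} specialised to $k=p$ (primality turning ``coprime to $p$'' into ``not a multiple of $p$''), and the paper states the corollary without written proof precisely because of this. Your explicit treatment of the reverse inclusion via $\ell=\operatorname{lcm}(\lambda_1,\dots,\lambda_r)$ and $\gcd(p,\ell)=1$ supplies the converse that the paper leaves implicit, and your remarks on the $r=0$ case and on where primality is genuinely needed are both accurate.
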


A group element whose order is coprime to $p$, for a prime $p$, is
called a \emph{$p'$-element}.
The density of $p'$-elements in
$\sym_{n}$ is $\sigma_n=\prod_{i=1}^{\lfloor n/p\rfloor}(1-1/(ip))$
by~\cite{ET2}*{Lemma~I}. We are grateful to John Dixon for pointing
out to us that the density of pre-$p$-cycles in
$\sym_n$ is $\frac1p\sigma_{n-p}$. The density in $\alt_n$ can be
calculated similarly using~\cite{BLGNPS}*{Theorem~3.3}.
%We thank John Dixon for this observation.

\section{Forbidden cycle lengths}\label{S:forbidden}

Given a set $\mathcal{A}\subseteq\{1,\dots,n\}$ of `forbidden' cycle lengths,
let $\rho$ be the proportion of elements of $\sym_n$ having no cycle length
in $\A$. Erd\H{o}s and Tur\'an prove $\rho\le\mu^{-1}$ in~\cite{ET2}*{Theorem~VI}
where $\mu=\sum_{a\in\A}1/a$. To see that his bound is not optimal,
consider the case when $\A=\{1\}$ and $\mu=1$. In this case,
$\rho$ is the proportion of derangements and the Erd\H{o}s-Tur\'an bound
is unhelpful. However, an inclusion-exclusion argument shows that
$\rho=\sum_{k=0}^n(-1)^k/k!$. As $e^{-1}<\rho$ when $n$ is even and $\rho<e^{-1}$
otherwise, we expect for large $n$ a bound of the form $c/e$ where $c=1+o(1)$.
We prove in Proposition~\ref{T1}
that there is a bound of the form $\rho\le c\kern1pt e^{-\mu}$ where $c=1.781\cdots$.
The constant $c$ equals $e^\gamma$ where $\gamma=0.5772\cdots$ is the
Euler-Mascheroni constant. Our proof uses exponential generating functions,
and has similarities to calculations used by Gruder~\cite{G}, 15 years before Erd\H{o}s and Tur\'an~\cite{ET2}.

If $\mu< 1$, then the Erd\H{o}s-Tur\'an bound gives no information.
On the other hand, if $\mu\ge 1$, then we will show that the upper
bound we obtain in Proposition~\ref{T1}, namely $e^{\gamma-\mu}$, is strictly less
than the bound $\mu^{-1}$ of Erd\H{o}s and Tur\'an. We use the fact
that $e^\mu\geq e \mu$, since the tangent to the curve $y=e^x$ at $x=1$
is $y=ex$. Hence
\[
  e^{\gamma-\mu}=\frac{c}{e^\mu}<\frac{1.782}{e^\mu}\le\frac{1.782}{e\mu}<\frac{2}{3\mu}.
\]
Similarly, our bound improves the bound $e^{1-\mu}$ of
Ford~\cite{Ford}*{Theorem~2.9} by a factor of $e^{1-\gamma}<2/3$. Note that
Ford's bound can be sharpened using estimations in the proof
of Proposition~\ref{T1}.
% Ford's web site says: Most of the material will be written up as lecture notes, which will be periodically updated : Lecture Notes, through Ch. 4. 2nd Draft. So our reference has today's date.

The centralizer
of $g$ in $\sym_n$ has order $C(\lambda(g)):=\prod_{k=1}^n k^{m_k}m_k!$, and
hence the conjugacy class $g^{\sym_n}$ has size $n!/C(\lambda(g))$. Since the
conjugacy classes in $\sym_n$ are parameterized by the partitions of $n$,
it follows that $\sum_{\lambda\vdash n} n! /C(\lambda)=n!$, or equivalently that
$\sum_{\lambda\vdash n} 1/C(\lambda)=1$.

The reader when trying to compare our analysis with that in~\cite{ET2}
may wish to know that the quantity $L(P)$ defined
on~\cite{ET2}*{p.\,159} for a permutation $P$ should be the number of
cycles of $P$ with length equal
to some $a_\nu\in\A$, and not the number of $a_\nu\in\A$ with the property
that $P$ has a cycle of length $a_\nu$.

\begin{remark}\label{L:AnSn}
  Given a set $X$ and a property $P$, let $\rho(X)$ denote the proportion of
  elements $x\in X$ that have property $P$. If $Y\subseteq X$,
  then $|Y|\rho(Y)\le |X|\rho(X)$. To see this let $P(X)$ be the subset
  of $x\in X$ that have property $P$. Since $|P(X)|=|X|\rho(X)$ and
  $P(Y)\subseteq P(X)$, we have $|Y|\rho(Y)\le|X|\rho(X)$.
\end{remark}

%% \begin{lemma}\label{L:AnSn}
%%   Given a set $X$ and a property $P$, let $\rho(X)$ denote the proportion of
%%   elements $x\in X$ that have property $P$. If $Y\subseteq X$,
%%   then $|Y|\rho(Y)\le |X|\rho(X)$.
%% \end{lemma}

%% \begin{proof}
%%   For a set $X$ let $P(X)$ be the set of $x\in X$ that have property $P$.
%%   Thus $|P(X)|=|X|\rho(X)$. Since $P(Y)\subseteq P(X)$, we have
%%   $|Y|\rho(Y)\le|X|\rho(X)$.
%% \end{proof}

%% \begin{lemma}\label{L:AnSn}
%%   For $X\subseteq G$ write $\rho(X)$ or $\rho_X$ for the
%%   proportion of elements of $g\in X$ that do \emph{not} have a given
%%   property $P$. Then if $H$ is a subgroup of $G$, $\rho_H\le|G:H|\rho_G$.
%%   In particular, $\rho(\alt_n)\le2\rho(\sym_n)$.
%% \end{lemma}

%% \begin{proof}
%%   For $X\subseteq G$ let $P(X)$ be the set of $g\in X$ having property $P$.
%%   Thus $1-|P(X)|/|X|=\rho_X$. Now $H\setminus P(H)\subseteq G\setminus P(G)$
%%   implies  $|H|-|P(H)|\le |G|-|P(G)|$. Multiplying
%%   by $1/|H|=|G:H|/|G|$  gives $1-|P(H)|/|H|\le|G:H|(1-|P(G)|/|G|)$,
%%   that is $\rho_H\le|G:H|\rho_G$. In particular,
%%   $\rho(\alt_n)\le|\sym_n:\alt_n|\rho(\sym_n)=2\rho(\sym_n)$.
%% \end{proof}

\begin{proposition}\label{T1}
  The proportion
  $\rho(\sym_n)$  of elements of~$\sym_n$ having no cycle length in $\A\subseteq\{1,\dots,n\}$
  satisfies $\rho(\sym_n)<e^{\gamma-\mu}$ where $\mu=\sum_{a\in\A}1/a$
  and $e^\gamma\approx 1.781$. The proportion $\rho(\alt_n)$ of elements
  of~$\alt_n$ with no cycle length in $\A$ satisfies $\rho(\alt_n)<2e^{\gamma-\mu}$.
%  Fix a subset $\mathcal{A}\subseteq\{1,\dots,n\}$. The proportion
%  $\rho_{\sym_n}$  of elements of~$\sym_n$ having no cycle in $\A$
%  satisfies $\rho_{\sym_n}<e^{\gamma-\mu}$ where $\mu=\sum_{a\in\A}1/a$
%  and $e^\gamma=1.7810\cdots$. The proportion $\rho_{\alt_n}$ of elements
% of~$\alt_n$ having no cycle in $\A$ satisfies $\rho_{\alt_n}<2e^{\gamma-\mu}$.
\end{proposition}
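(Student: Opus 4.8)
The plan is to use exponential generating functions (EGFs) to extract the proportion $\rho(\sym_n)$ of elements with no cycle length in $\A$, and then to bound the resulting coefficient. Write $\A^c=\{1,\dots,n\}\setminus\A$ for the allowed cycle lengths. By the exponential formula for permutations, the EGF whose $x^n$ coefficient (times $n!$) counts permutations of $\{1,\dots,n\}$ all of whose cycle lengths lie in $\A^c$ is $F(x)=\exp\!\bigl(\sum_{k\in\A^c,\,k\le n} x^k/k\bigr)$; hence $\rho(\sym_n)=[x^n]F(x)$. Since $\sum_{k=1}^{\infty}x^k/k=-\log(1-x)$, we can write $F(x)=\frac{1}{1-x}\exp\!\bigl(-\sum_{a\in\A} x^a/a-\sum_{k>n}x^k/k\bigr)$, so that $\rho(\sym_n)=[x^n]\bigl(\frac{1}{1-x}H(x)\bigr)$ where $H(x)=\exp(-\sum_{a\in\A}x^a/a)\cdot\exp(-\sum_{k>n}x^k/k)$. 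Because $\frac{1}{1-x}=\sum_{m\ge0}x^m$, this gives the clean identity $\rho(\sym_n)=\sum_{m=0}^{n}h_m$ where $h_m=[x^m]H(x)$. The key structural point is that $H$ has non-positive ``first'' coefficients coming from $-\sum x^a/a$, but repeated products can flip signs, so one must argue more carefully.

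The main step is then to show $\sum_{m=0}^n h_m< e^{\gamma-\mu}$. I would split $H(x)=G(x)K(x)$ with $G(x)=\exp(-\sum_{a\in\A}x^a/a)$ and $K(x)=\exp(-\sum_{k>n}x^k/k)$. Since every monomial in $\sum_{k>n}x^k/k$ has degree exceeding $n$, we have $[x^m]K(x)=[x^m]\,1$ for $m\le n$ after a little care — more precisely, for $0\le m\le n$ one has $\rho(\sym_n)=\sum_{m=0}^n [x^m]G(x)$ since the correction factor $K$ only affects coefficients of degree $>n$ in the product $\frac{1}{1-x}G(x)K(x)$; I'd verify this by noting $\frac{1}{1-x}G(x)$ and $F(x)$ agree in all coefficients up through $x^n$. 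So it suffices to prove $\sum_{m=0}^n g_m<e^{\gamma-\mu}$ where $g_m=[x^m]G(x)$. Now evaluate (or rather bound) the partial sum: $\sum_{m=0}^n g_m\le\sum_{m=0}^\infty |{\cdot}|$ is too lossy, so instead I would use that $\sum_{m=0}^n g_m=[x^n]\bigl(\frac{1}{1-x}G(x)\bigr)$ and compare with the full generating function. The cleanest route: $G(1^-)$ does not converge when $\mu$ involves $a=1$, so one works with $G(x)/(1-x)^{?}$ — here is where the constant $e^\gamma$ enters, via the asymptotics of $\prod_{a\in\A}(1-x)^{1/a}\cdot$ something, ultimately traceable to $\prod_{k\le n}(1-1/k)^{-1}\sim e^\gamma\log n$-type estimates, i.e. Mertens.

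Concretely, I expect the argument to run: bound $\rho(\sym_n)$ by a product $\prod_{a\in\A}(1-1/a)$ when $1\notin\A$ — wait, that overcounts independence — so instead use the Erdős–Turán-style probabilistic interpretation: the events ``$g$ has no $a$-cycle'' are negatively correlated, giving $\rho(\sym_n)\le\prod_{a\in\A}\Pr[\text{no }a\text{-cycle}]$, and each factor is at most $e^{-1/a}$ by the Poisson heuristic made rigorous (the number of $a$-cycles is dominated by, or close to, Poisson$(1/a)$). Multiplying, $\rho(\sym_n)\le e^{-\sum 1/a}=e^{-\mu}$, which is even stronger than claimed — so this naive bound must fail for small $a$ (indeed for $a=1$, $\Pr[\text{no fixed point}]\to e^{-1}$ but is sometimes larger), and the factor $e^\gamma$ is precisely the accumulated excess. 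I would therefore handle the ``small'' $a$ (say $a\le$ some cutoff) exactly via the EGF coefficient bound, absorbing their contribution into the constant $e^\gamma=\lim\log n\prod_{k\le n}(1-1/k)\cdot$const, and handle ``large'' $a$ by the negative-correlation product bound. \textbf{The main obstacle} is making the $a=1$ (and other tiny $a$) contributions rigorous with the sharp constant $e^\gamma$ rather than $e$ or $2$: this requires a genuine asymptotic/monotonicity argument about the partial sums $\sum_{m\le n}g_m$ rather than a crude tail bound, and is exactly the place where the improvement over Erdős–Turán ($\mu^{-1}$) and Ford ($e^{1-\mu}$) is won. For $\alt_n$, the proportion of elements of $\sym_n$ lying in $\alt_n$ is $1/2$, so by Remark~\ref{L:AnSn} applied with $X=\sym_n$, $Y=\alt_n$ and $P$ the property ``no cycle length in $\A$'', we get $|\alt_n|\,\rho(\alt_n)\le|\sym_n|\,\rho(\sym_n)$, i.e. $\rho(\alt_n)\le 2\rho(\sym_n)<2e^{\gamma-\mu}$, which is immediate and needs no separate generating-function computation.
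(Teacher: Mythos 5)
Your setup is correct and matches the paper's starting point: $\rho(\sym_n)$ is the $x^n$-coefficient of $F(x)=\exp\bigl(\sum_{k\notin\A,\,k\le n}x^k/k\bigr)$, and the reduction $\rho(\alt_n)\le 2\rho(\sym_n)$ via Remark~\ref{L:AnSn} is exactly what the paper does. But the core of the argument --- the bound $[x^n]F(x)<e^{\gamma-\mu}$ --- is not carried out. You reformulate the problem as bounding the partial sum $\sum_{m=0}^n g_m$ of the mixed-sign coefficients of $G(x)=\exp(-\sum_{a\in\A}x^a/a)$, correctly observe that the naive bounds (absolute convergence, or a negative-correlation product $\prod e^{-1/a}$) either are too lossy or are simply false for small $a$, and then explicitly state that the remaining step ``requires a genuine asymptotic/monotonicity argument about the partial sums'' without supplying one. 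That remaining step is precisely the theorem; as written the proposal identifies the obstacle but does not overcome it.

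The idea you are missing is to stay with the nonnegative-coefficient series $Q(z)=F(z)$ rather than passing to its mixed-sign reciprocal factor. Writing $Q=e^P$ with $P(z)=\sum_{k\notin\A,\,k\le n}z^k/k$, the log-derivative identity $Q'=P'Q$ gives the convolution $nq_n=\sum_{k=0}^{n-1}p_{n-k}q_k$ with $p_j\in\{0,1\}$; since every $q_k\ge 0$, this yields $nq_n\le\sum_{k=0}^{n-1}q_k\le Q(1)-q_n$, i.e.\ $(n+1)q_n\le Q(1)$. The constant $e^\gamma$ then enters not through any Mertens-type product over $\A$ or a correlation inequality, but simply through the harmonic-number estimate $P(1)=H_n-\mu=\log n+\gamma-\mu+E(n)$ with $0<E(n)<1/(2n)$, which gives $Q(1)<(n+1)e^{\gamma-\mu}$ and hence $q_n<e^{\gamma-\mu}$. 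This is an entirely elementary two-line estimate once the positivity of the $q_k$ is exploited; your route through $G(x)/(1-x)$ would require controlling the sign and size of the tail $\sum_{m>n}g_m$, which is genuinely harder and is not done in the proposal.
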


\begin{proof}
  For $k\in\{1,\dots,n\}$, let $p_k$ equal 0 if $k\in\A$, and 1 otherwise.
  Set
  \[
  P(z)=\sum_{k=1}^n\frac{p_kz^k}{k}\qquad\textup{and}\qquad
  Q(z)=e^{P(z)}.
  \]
  As $P(z)$ is a polynomial, $Q(z)$ is a differentiable function of the
  complex variable~$z$, so its Taylor series $\sum_{m=0}^\infty q_mz^m$
  at $z=0$ converges to $Q(z)$ for all $z$. We consider the coefficient~$q_n$.
  Since
  \[
  Q(z)=\prod_{k\not\in\A,\, k\le n} e^{z^k/k}
  =\prod_{k\not\in\A,\, k\le n}\,\sum_{m_k=0}^\infty\frac{(z^k/k)^{m_k}}{m_k!},
  \]
  the term $q_nz^n$ is a sum whose summands have the form
  \[
  \prod_{k\not\in\A,\, k\le n}\frac{z^{km_k}}{k^{m_k}(m_k)!}=\frac{z^n}{C(\lambda)}\qquad
    \textup{for each partition $\lambda=\langle k^{m_k}\rangle_{k\not\in\A}$ with $n=\sum_{k\not\in\A} km_k$.}
  \]
  It follows that $q_n=\sum_{\lambda\vdash n}1/C(\lambda)$ where the parts of
  $\lambda$ do not lie in $\A$ (equivalently $m_k=0$ for all $k\in\A$).
  This is precisely the proportion $\rho(\sym_n)$ of elements of $\sym_n$ none of
  whose cycle lengths lie in $\A$. Alternatively,~\cite{G}*{Eq.~(18)}
  shows that $q_n=\rho(\sym_n)$. 

  We now compute an upper bound for $q_n$. Define the
  $n$th harmonic number to be
  \[
    H_n=\sum_{k=1}^n\frac 1k,\qquad
    \textup{and let}\qquad \gamma=\lim_{n\to\infty} (H_n-\log n)=0.5772\cdots
  \]
  be the Euler-Mascheroni constant. The error term $E(n)=H_n-\log n-\gamma$
  satisfies $0<E(n)<1/(2n)$ by~\cite{H}*{p.\,75}. Since $e^x<1+x+x^2+\cdots=1/(1-x)$
  for $0<x<1$, it follows that $e^{1/(2n)}<1+1/(2n-1)\le 1+1/n$.
  Hence 
  \[
  1<e^{E(n)}<1+\frac{1}{n}\quad\textup{and}\quad
  P(1)=\sum_{k\not\in\A,\,k\le n}\frac 1k=H_n-\mu=\log n +\gamma-\mu+E(n).
  \]
  The previous display implies
  \begin{equation}\label{E:Q}
  Q(1)=e^{P(1)}=e^{\log n}e^{\gamma-\mu}e^{E(n)}
  <n  e^{\gamma-\mu}\left(1+\frac 1n\right)
  =(n+1)e^{\gamma-\mu}.
  \end{equation}
  Differentiating the equation $Q(z)=e^{P(z)}$ gives
  \[
  Q'(z)=P'(z)Q(z),\quad\textup{that is,}\quad \sum_{k=0}^\infty kq_kz^{k-1}=
  \left(\sum_{k=0}^{n-1} p_{n-k}z^{n-1-k}\right)\left(\sum_{k=0}^\infty q_kz^k\right).
  \]
  Equating the coefficients of $z^{n-1}$ on both sides gives
  \[
  nq_n=\sum_{k=0}^{n-1}p_{n-k}q_k\le\sum_{k=0}^{n-1}q_k
  \le \left(\sum_{k=0}^\infty q_k\right)-q_n=Q(1)-q_n.
  \]
  It follows from~\eqref{E:Q} that
  \[
    (n+1)q_n\le Q(1)<(n+1)e^{\gamma-\mu}.
  \]
  Cancelling $n+1$ gives $q_n<e^{\gamma-\mu}$, that is,
  $\rho(\sym_n)<e^{\gamma-\mu}<1.782e^{-\mu}$. Finally, Lemma~\ref{L:AnSn} implies
  that
  $\rho(\alt_n)\le 2\rho(\sym_n)<2e^{\gamma-\mu}<3.563e^{-\mu}$.
\end{proof}

\section{Estimating sums}\label{S:estimating}

In this section we bound the quantity $\mu$ in Proposition~\ref{T1}. One approach is to use
the Stieltjes integral~\cite{RS}*{p.\,67}. Instead, we take an elementary
approach using finite sums, which we now briefly review. In the sequel,
$k$ denotes an integer and $p$ denotes a prime. The
\emph{backward difference} of a function $f(k)$ is defined by
$(\nabla f)(k)=f(k)-f(k-1)$. An easy calculation shows
\begin{align}
  \sum_{k=a+1}^{b} (\nabla f)(k)&=f(b)-f(a),\label{E:Sum1}\quad\textup{and}\\
  \nabla (f(k)g(k)) &= (\nabla f)(k)g(k-1)+f(k)(\nabla g)(k).\label{E:Sum3}
\end{align}
Rearranging~\eqref{E:Sum3} and using~\eqref{E:Sum1} gives
\begin{equation}\label{E:ProductRule2}
  \sum_{k=a+1}^{b} f(k)(\nabla g)(k) = f(b)g(b)-f(a)g(a)
  -\sum_{k=a+1}^{b}(\nabla f)(k)g(k-1).
\end{equation}

Let $\pi(x)=\sum_{p\le x}1$.
Observe that $(\nabla\pi)(k)$ equals 1 if $k$ is prime, and 0 otherwise. Thus
$\sum_{a<p\le b}f(p)=\sum_{a<k\le b}f(k)(\nabla\pi)(k)$ and the latter can
be estimated using~\eqref{E:ProductRule2}.
The following bounds hold for \emph{real} $x\ge17$ by~\cite{RS}*{Theorem~1}.
In order to get a practically useful bounds in Theorem~\ref{T9} we note
that~\eqref{E:PiBounds} holds for all \emph{integers} $x\ge11$:
\begin{equation}\label{E:PiBounds}
  \frac{x}{\log x}\le\pi(x)\le\frac{x}{\log x}\left(1+\frac{3}{2\log x}\right)
  \qquad\textup{for $x\in\{11,12,13,\dots\}$.}
\end{equation}

\begin{lemma}\label{L2}
  Suppose $a$ and $b$ are real numbers with $12\le a\le b$. Then
%  \[
%  \sum_{a<p\le b}\frac{1}{p^2}\le\left(\frac{2.06}{\lfloor a\rfloor\log \lfloor a\rfloor}-\frac{2.06}{\lfloor b\rfloor\log \lfloor b\rfloor}\right)
%  +\frac{3.06}{\lfloor a\rfloor(\lfloor a\rfloor-1)\log \lfloor a\rfloor}+\frac{1.5}{\lfloor b\rfloor(\log \lfloor b\rfloor)^2}.
%  \]
%  Hence
  \[
  \sum_{a<p\le b}\frac{1}{p^2}\le\frac{2.22}{\lfloor a\rfloor\log \lfloor a\rfloor}-\frac{1.61}{\lfloor b\rfloor\log \lfloor b\rfloor}.
  \]
\end{lemma}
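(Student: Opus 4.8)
The plan is to estimate $\sum_{a<p\le b}p^{-2}$ by Abel summation against the prime counting function $\pi$, using the product rule \eqref{E:ProductRule2} with $f(k)=(\nabla\pi)(k)$ replaced by the identity $\sum_{a<p\le b}p^{-2}=\sum_{a<k\le b}k^{-2}(\nabla\pi)(k)$, and then summing by parts the other way: set $g(k)=\pi(k)$ and take $f(k)=k^{-2}$, so that
\[
\sum_{a<p\le b}\frac1{p^2}=\sum_{\lfloor a\rfloor<k\le\lfloor b\rfloor}\frac{(\nabla\pi)(k)}{k^2}
=\frac{\pi(\lfloor b\rfloor)}{\lfloor b\rfloor^2}-\frac{\pi(\lfloor a\rfloor)}{\lfloor a\rfloor^2}
-\sum_{\lfloor a\rfloor<k\le\lfloor b\rfloor}\Bigl(\nabla\tfrac1{k^2}\Bigr)\pi(k-1).
\]
Here the sum ranges only over integers, so the first reduction is to replace the real endpoints $a,b$ by $\lfloor a\rfloor,\lfloor b\rfloor$; since $a\ge12$ we have $\lfloor a\rfloor\ge12\ge11$, so the bounds \eqref{E:PiBounds} apply at every integer point appearing. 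The leading boundary term $\pi(\lfloor b\rfloor)/\lfloor b\rfloor^2$ is nonpositive in effect (it will be absorbed into the $-1.61/(\lfloor b\rfloor\log\lfloor b\rfloor)$ term via $\pi(\lfloor b\rfloor)\ge \lfloor b\rfloor/\log\lfloor b\rfloor$), and $-\pi(\lfloor a\rfloor)/\lfloor a\rfloor^2\le -1/(\lfloor a\rfloor\log\lfloor a\rfloor)$ likewise. So after sign bookkeeping the content is a bound on the tail sum $\sum_k(\tfrac1{(k-1)^2}-\tfrac1{k^2})\pi(k-1)$.

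For that sum I would use $\nabla\tfrac1{k^2}=\tfrac1{(k-1)^2}-\tfrac1{k^2}=-\tfrac{2k-1}{k^2(k-1)^2}$ and the upper bound in \eqref{E:PiBounds}, $\pi(k-1)\le\tfrac{k-1}{\log(k-1)}\bigl(1+\tfrac3{2\log(k-1)}\bigr)$, to turn the sum into $\sum_k\tfrac{2k-1}{k^2(k-1)^2}\cdot\tfrac{k-1}{\log(k-1)}(1+\tfrac3{2\log(k-1)})$. Crudely, $\tfrac{2k-1}{k^2(k-1)}\le\tfrac{2}{k(k-1)^2}$ for $k$ reasonably large, or more simply $\le\tfrac{2}{(k-1)k}$-type bounds suffice; the point is that $\tfrac{2k-1}{k^2(k-1)^2}\cdot(k-1)=\tfrac{2k-1}{k^2(k-1)}$ is of order $2/k^2$, and $\tfrac1{\log(k-1)}$ is decreasing, so one compares $\sum_{k>\lfloor a\rfloor}\tfrac{2}{k^2\log(k-1)}$ with an integral or a telescoping bound of the form $\tfrac{C}{\lfloor a\rfloor\log\lfloor a\rfloor}$. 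In fact the cleanest route is to find an explicit function $h(k)$ with $-\nabla h(k)\ge \tfrac{2k-1}{k^2(k-1)^2}\cdot\tfrac{k-1}{\log(k-1)}(1+\tfrac3{2\log(k-1)})$ for all integers $k\ge13$ — a candidate being $h(k)=\tfrac{c}{k\log k}$ for a suitable constant $c$ — so that the tail sum telescopes to at most $h(\lfloor a\rfloor)-h(\lfloor b\rfloor)$, and then combine everything to get the stated constants $2.22$ and $1.61$.

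The main obstacle is calibrating the numerical constants: one must choose the dominating function $h(k)=c/(k\log k)$ (or similar) so that the pointwise inequality $-\nabla h(k)\ge(\text{summand})$ genuinely holds for all integers $k\ge 13$, not just asymptotically — this requires checking that $c\cdot\nabla\tfrac1{k\log k}$, which behaves like $-c/(k^2\log k)$, dominates a quantity behaving like $-2/(k^2\log k)\cdot(1+o(1))$, so one needs $c$ a little larger than $2$, and then verifying the finitely many small cases $k=13,14,\dots$ up to where the asymptotics take over. The constant $a\ge12$ in the hypothesis is presumably exactly what makes these finite checks come out, and the slack between $2.22$ (slightly more than $c$, after the $1+\tfrac3{2\log k}$ factor inflates things near $k=13$) and $1.61$ (slightly less than $1$ plus leftover positive contributions from the $b$ endpoint) is the room needed to close the estimate cleanly. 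I would organize the proof so that all the real-variable reductions are done first, the telescoping inequality is stated as the key pointwise lemma, and the constant-chasing is confined to a short verified computation at the end.
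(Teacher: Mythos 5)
Your proposal follows essentially the same route as the paper's proof: reduce to the integer endpoints $\lfloor a\rfloor,\lfloor b\rfloor$, Abel-sum $\sum k^{-2}(\nabla\pi)(k)$ against $\pi$ via \eqref{E:ProductRule2}, apply the bounds \eqref{E:PiBounds} to the two boundary terms and to $\pi(k-1)$ inside the remaining sum, and finish by telescoping. Two remarks. First, there is a direction slip at the $b$-endpoint: the boundary term $\pi(\lfloor b\rfloor)/\lfloor b\rfloor^2$ enters with a \emph{plus} sign, so to upper-bound the whole expression you need the \emph{upper} estimate $\pi(\lfloor b\rfloor)\le\frac{\lfloor b\rfloor}{\log\lfloor b\rfloor}\bigl(1+\frac{3}{2\log\lfloor b\rfloor}\bigr)\le\frac{1.61\,\lfloor b\rfloor}{\log\lfloor b\rfloor}$, not the lower bound $\pi(\lfloor b\rfloor)\ge\lfloor b\rfloor/\log\lfloor b\rfloor$ that you cite; the net coefficient $-1.61$ at $b$ then arises as $1.61-3.22$ after the telescoped sum contributes $-3.22/(\lfloor b\rfloor\log\lfloor b\rfloor)$. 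Second, the paper sidesteps the pointwise dominating-function lemma that you identify as the main obstacle: after extracting $1+\frac{3}{2\log(k-1)}<1.61$ and $\frac{1}{\log(k-1)}\le\frac{1}{\log\lfloor a\rfloor}$ (both valid since $k-1\ge 12$), the remaining factor satisfies $\frac{2k-1}{k^2(k-1)}<\frac{2}{k(k-1)}$ (because $k^2>k(k-\frac12)$), and $\sum\frac{2}{k(k-1)}$ telescopes exactly to $2\bigl(\frac{1}{\lfloor a\rfloor}-\frac{1}{\lfloor b\rfloor}\bigr)$. This yields the constants $2.22=3.22-1$ and $1.61=3.22-1.61$ with no case-checking of small $k$. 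Your $h(k)=c/(k\log k)$ variant can be made to work, but the pointwise inequality forces $c$ somewhat above $2.22$ near $k=13$ and produces different (though still sufficient) constants, so the calibration you defer is genuinely more delicate than in the paper's version.
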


\begin{proof}
  Since the primes in the range $a<p\le b$ coincide with the primes in the range
  $\lfloor a\rfloor<p\le \lfloor b\rfloor$, we henceforth may (and will)
  assume that $a$ and $b$ are \emph{integers} satisfying $12\le a\le b$.
  Also, if $a=b$ the inequality hold, so we assume also that $a+1\le b$.
  Applying~\eqref{E:ProductRule2} and~\eqref{E:PiBounds} gives
  \def\K{\kern-3pt}
\begin{align*}
  \sum_{a<p\le b}\frac{1}{p^2}&=\sum_{k=a+1}^{b} \frac{1}{k^2}(\nabla \pi)(k)=
  \frac{\pi(b)}{b^2}-\frac{\pi(a)}{a^2}
  -\sum_{k=a+1}^{b}\frac{-2k+1}{k^2(k-1)^2}\pi(k-1)\\
  &\K\le \frac{1}{b\log b}\left(1\K+\K\frac{3}{2\log b}\right)\K-\K\frac{1}{a\log a}
  \K+\K\K\sum_{k=a+1}^{b}\K\frac{2k-1}{k^2(k-1)\log(k-1)}\left(1\K+\K\frac{3}{2\log(k-1)}\right)\K.
\end{align*}
Since $12\le a\le k-1$, we have
$1+\frac{3}{2\log(k-1)}\le1+\frac{3}{2\log(12)}<1.61$. As $k-\frac12<k$,
the $\Sigma$-term above is less than
\begin{align*}
\frac{1.61}{\log a}\sum_{k=a+1}^b\frac{2(k-\frac{1}{2})}{k(k-\frac{1}{2})(k-1)}
&= \frac{3.22}{\log a}\sum_{k=a+1}^b\frac{1}{k(k-1)} 
= \frac{3.22}{\log a}\sum_{k=a+1}^b\nabla\left(-\frac{1}{k}\right) \\
&= \frac{3.22}{\log a}\left(\frac{1}{a}-\frac{1}{b}\right) \\
&\le \frac{3.22}{a\log a} - \frac{3.22}{b\log b}.
\end{align*}
Combining the previous two displays gives the desired inequality:
\[
\sum_{a+1<p<b}\frac{1}{p^2} < \left(-1+3.22\right)\frac{1}{a\log a} +
\left(1.61-3.22\right)\frac{1}{b\log b} =
\frac{2.22}{a\log a}-\frac{1.61}{b\log b}.\qedhere
\]
\end{proof}

\begin{lemma}\label{L3}
  Suppose $a,b$ are real numbers with $2\le a\le b$ and $p$ is prime. Then
  \[
  \log\left(\frac{\log b}{\log a}\right)-\frac1{2(\log b)^2}-\frac1{(\log a)^2}
  <  \sum_{a<p\le b}\frac{1}{p}<
  \log\left(\frac{\log b}{\log a}\right)+\frac1{(\log b)^2}+\frac1{2(\log a)^2}.
  \]
\end{lemma}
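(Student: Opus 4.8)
The plan is to mimic the strategy of Lemma~\ref{L2}: express $\sum_{a<p\le b}1/p$ as $\sum_{a<k\le b}\frac1k(\nabla\pi)(k)$, apply the summation-by-parts identity~\eqref{E:ProductRule2} with $f(k)=1/k$ and $g=\pi$, and then insert the $\pi(x)$-estimates~\eqref{E:PiBounds}. First I would reduce to the case of integer $a,b$ as before (the primes in $(a,b]$ and $(\lfloor a\rfloor,\lfloor b\rfloor]$ coincide), and dispose of $a=b$ trivially, so that $a+1\le b$. Then~\eqref{E:ProductRule2} gives
\[
\sum_{a<p\le b}\frac1p=\frac{\pi(b)}{b}-\frac{\pi(a)}{a}-\sum_{k=a+1}^{b}\left(-\frac1{k(k-1)}\right)\pi(k-1)
=\frac{\pi(b)}{b}-\frac{\pi(a)}{a}+\sum_{k=a+1}^{b}\frac{\pi(k-1)}{k(k-1)}.
\]
The boundary terms are controlled directly by~\eqref{E:PiBounds}, giving $\frac{\pi(b)}{b}$ and $\frac{\pi(a)}{a}$ each within $\frac1{\log b}(1+\frac3{2\log b})$ etc. of $1/\log b$, $1/\log a$. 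For the sum, I would write $\pi(k-1)\le\frac{k-1}{\log(k-1)}\bigl(1+\frac3{2\log(k-1)}\bigr)$, cancel the $(k-1)$, and compare $\sum_{k=a+1}^b\frac1{k\log(k-1)}$ with the integral $\int_a^b\frac{dt}{t\log t}=\log\log b-\log\log a=\log(\log b/\log a)$; since $1/(t\log t)$ is decreasing, a standard integral comparison bounds the sum above and below by the integral plus/minus an endpoint term of size $O(1/(a\log a))$, which is absorbed into the stated $1/(\log a)^2$-type error.

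The matching lower bound is obtained the same way, now using only the left inequality $\pi(x)\ge x/\log x$ in~\eqref{E:PiBounds} for the sum and boundary terms, and dropping the positive correction factors. The asymmetry of the two error constants in the statement ($\frac1{(\log b)^2}+\frac1{2(\log a)^2}$ on top, $\frac1{2(\log b)^2}+\frac1{(\log a)^2}$ on the bottom) reflects exactly which of the three pieces (the two boundary terms and the sum) each $\pi$-bound is applied to, together with the sign of the integral-comparison error at each endpoint: the $\frac3{2\log}$-corrections contribute on the upper side near $b$ via $\pi(b)/b$ and near $a$ via the sum, and dually on the lower side. I expect the main technical obstacle to be bookkeeping rather than ideas: tracking all of these $O(1/(\log)^2)$-contributions — from the correction factor in~\eqref{E:PiBounds}, from the integral-vs-sum discrepancy, and from $\log(k-1)$ versus $\log k$ in the denominators — and checking that they collectively fit inside the clean constants $1$ and $\tfrac12$ stated in the lemma, using $a\ge2$ (or possibly a slightly larger threshold where~\eqref{E:PiBounds} fails, handled by a finite check for the remaining small values of $a$).

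A minor point to watch: \eqref{E:PiBounds} is asserted for integers $x\ge11$, so for $2\le a<11$ or small $k$ one cannot invoke it directly; there the sum $\sum_{a<p\le b}1/p$ over primes below $11$ is a finite explicit quantity, and I would verify the inequality for those boundary ranges by direct numerical estimation, then glue with the main estimate for $k\ge12$.
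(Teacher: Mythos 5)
Your overall strategy has a genuine quantitative gap: summation by parts against the Chebyshev-type bounds~\eqref{E:PiBounds} cannot produce error terms as small as $1/(\log a)^2+1/(\log b)^2$. After the partial summation you are left with the boundary contribution $\pi(b)/b-\pi(a)/a\approx 1/\log b-1/\log a$ together with the contribution of the correction factor $1+\tfrac{3}{2\log t}$ integrated against $dt/(t\log t)$, which lies between $0$ and roughly $\tfrac32\bigl(1/\log a-1/\log b\bigr)$. Writing $\pi(t)=\tfrac{t}{\log t}+R(t)$ with $0\le R(t)\le \tfrac{3t}{2(\log t)^2}$, the total deviation of $\sum_{a<p\le b}1/p$ from $\log(\log b/\log a)$ that your method controls is only pinned down to an interval of width of order $1/\log a$, not $1/(\log a)^2$: for instance, nothing in~\eqref{E:PiBounds} rules out $R\equiv 0$, in which case your identity would give a value about $1/\log a$ \emph{below} the claimed lower bound when $b\gg a$ and $a$ is large. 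The true cancellation of the $1/\log$ terms happens only because $R(t)$ is asymptotically \emph{equal} to $t/(\log t)^2$ (the second term of $\mathrm{li}(t)$), so that $\int^x R(t)\,t^{-2}\,dt$ produces a $-1/\log x$ that cancels $\pi(x)/x$; a one-sided sandwich $0\le R(t)\le \tfrac{3t}{2(\log t)^2}$ is too coarse to see this. So the "bookkeeping" you defer is not bookkeeping — the first-order terms genuinely do not close, and the method loses a factor of $\log$ in the error.

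The paper avoids all of this by quoting a stronger input: the Corollary to Theorem~5 of Rosser--Schoenfeld~\cite{RS}, which states directly that
\[
\log\log x+M-\frac{1}{2(\log x)^2}<\sum_{p\le x}\frac{1}{p}<\log\log x+M+\frac{1}{(\log x)^2}\qquad(x>1),
\]
with $M$ the Meissel--Mertens constant. Subtracting the bound at $a$ from the bound at $b$ (in the two possible pairings) gives exactly the asymmetric error constants in the lemma, valid for all $2\le a\le b$ with no case analysis for small $a$. If you want to keep an elementary partial-summation argument, you would need as input an estimate for $\theta(x)$ or for $\sum_{p\le x}(\log p)/p$ accurate to $O(1/\log x)$, which is essentially how~\cite{RS} prove their Theorem~5; deriving it from~\eqref{E:PiBounds} alone is not possible.
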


\begin{proof}
  The proof uses the following bounds~ \cite{RS}*{Theorem~5, Corollary}
\[
  \log\log x+M-\frac{1}{2(\log x)^2}<\sum_{p\le x}\frac{1}{p}< \log\log x+M+\frac{1}{(\log x)^2}
  \qquad\textup{for $x>1$},
\]
%\[
%  -\frac{1}{2\log(x)^2}<\sum_{p\le x}\frac{1}{p}-(\log(\log(x))+M)< \frac{1}{\log(x)^2}
%  \qquad\textup{for $x>1$},
%\]
where $M=0.26149\cdots$ is the Meissel-Mertens constant. Thus
\begin{align*}
\sum_{a<p\le b} \frac{1}{p}&=\sum_{p\le b} \frac{1}{p}-\sum_{p\le a} \frac{1}{p}\\
&<\left(\log\log b+M+\frac1{(\log b)^2}\right)-\left(\log\log a+M-\frac1{2(\log a)^2}\right)\\
&=\log\left(\frac{\log b}{\log a}\right)+\frac1{(\log b)^2}+\frac1{2(\log a)^2}.
\end{align*}
Similarly
\begin{align*}
\sum_{a<p\le b} \frac{1}{p}&=\sum_{p\le b} \frac{1}{p}-\sum_{p\le a} \frac{1}{p}\\
&>\log\log b+M-\frac1{2(\log b)^2}-\left(\log\log a+M+\frac1{(\log a)^2}\right)\\
&=\log\left(\frac{\log b}{\log a}\right)-\frac1{2(\log b)^2}-\frac1{(\log a)^2}.\qedhere
\end{align*}
\end{proof}

%%%%%%%%%%%%%%%%%%%%%%%%%%%%%%%%%%%%%%%%%%%%%%%%%%%%%%%%%%%%%%%%%%

\section{Proof of Theorem~\ref{T:one}}\label{S:Proof}

Suppose $G\in\{\alt_n,\sym_n\}$. Given certain functions $a=a(n)$ and $d=d(n)$,
our strategy is to find a lower bound for the proportion $\rho_G$ of 
pre-$p$-cycles in $G$ with $a(n)<p<a(n)^{d(n)}$. It is shown in~\cite{U1}
that $\rho_G\to 1$ as $n\to\infty$. Theorem~\ref{T:one} was motivated by the needed to analyze
certain Las-Vegas algorithms for permutation groups, we shall adapt a
combinatorial argument in~\cite{U1} to quantify this result.
Henceforth $p$ denotes a prime, and $k$ denotes an integer.

Fix $G\in\{\alt_n,\sym_n\}$. Recall
$\lambda(g)=\langle 1^{m_1(g)}2^{m_2(g)}\cdots n^{m_n(g)}\rangle$
is the partition of $n$ whose parts are the cycle lengths of $g$, and part
$k$ has multiplicity $m_k(g)$. Define
\begin{align*}
  \PP_n&=\{p\mid a(n)<p\le a(n)^{d(n)}\},\\
  T(G)&=\left\{g\in G\mid \textup{$m_p(g)\ge1$ for some $p\in\PP_n$}\right\},\\
  U_p(G)&=\left\{g\in G\bigm| \textup{$m_p(g)\ge1$ and $\sum_{k\ge1}m_{kp}(g)\ge2$}\right\},\textup{ and}\\
  U(G)&=\bigcup_{p\in\PP_n} U_{p}(G).
\end{align*}
Note that $g\in T(G)\setminus U_{p}(G)$ has $m_p(g)\ge1$ and
$\sum_{k\ge1}m_{kp}(g)=1$. Hence $m_p(g)=1$
and $g$ is a pre-$p$-cycle by Lemma~\ref{L0}.
Thus $T(G)\setminus U(G)$ is precisely the set of pre-$p$-cycles in $G$
for some $p\in\PP_n$. In the following proposition, we view $G\in\{\alt_n,\sym_n\}$
as a probability space with the uniform distribution, and we seek a lower
bound for
\begin{align*}
  \rho_G&
  =\textup{Prob}\left(g\in G\textup{ is a pre-$p$-cycle for some $p\in\PP_n$}\right)=\textup{Prob}\left(g\in T(G)\setminus U(G)\right)\\
&=\textup{Prob}\left(g\in G\mid m_p(g)=1\textup{ and }
\sum_{k\ge2}m_{kp}(g)=0\textup{ for some $p\in\PP_n$}\right). 
\end{align*}

\begin{proposition}\label{P9}
  Let $a(n), d(n)$ be functions satisfying $a(n)\ge12$, $d(n)>1$ and
  $a(n)^{d(n)}\le n$ for all $n$. Using the preceding notation and
  $\delta=|\sym_n:G|$,  we have
  \[
  \rho_G=\frac{|T(G)|-|U(G)|}{|G|}\ge1-\frac{2.287\delta}{d(n)}-
  \frac{2.22(\log n-1)}{\lfloor a(n)\rfloor\log \lfloor a(n)\rfloor}-
  \frac{4.4\delta\log n}{a(n)(\log a(n))n}.
  %%\frac{2n\delta}{a(n)(n-1)^2}.
  \]
\end{proposition}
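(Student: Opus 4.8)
The plan is to bound $\rho_G=(|T(G)|-|U(G)|)/|G|$ from below by separately estimating (i) how close $|T(G)|/|G|$ is to $1$, i.e.\ bounding the proportion of $g\in G$ with $m_p(g)=0$ for \emph{all} $p\in\PP_n$, and (ii) the proportion $|U(G)|/|G|$, i.e.\ the ``bad'' event that some $p\in\PP_n$ divides the length of at least two cycles (counted with multiplicity). For (i), the set $G\setminus T(G)$ consists of elements with no cycle length in the set $\A=\PP_n\subseteq\{1,\dots,n\}$ (using $a(n)^{d(n)}\le n$), so Proposition~\ref{T1} applies: the proportion is at most $\delta\,e^{\gamma-\mu}$ where $\mu=\sum_{p\in\PP_n}1/p$ (the factor $\delta=|\sym_n:G|\in\{1,2\}$ absorbing the $\alt_n$ case). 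To make this useful I need a \emph{lower} bound on $\mu$; by Lemma~\ref{L3} with $a=a(n)$, $b=a(n)^{d(n)}$ one gets $\mu>\log d(n)-\frac{1}{2(\log b)^2}-\frac{1}{(\log a)^2}$, hence $e^{\gamma-\mu}<e^\gamma d(n)^{-1}\exp\!\big(\tfrac{1}{2(\log b)^2}+\tfrac{1}{(\log a)^2}\big)$, and since $a(n)\ge12$ the exponential correction and $e^\gamma$ together are bounded by a constant; the claimed numerator $2.287$ should be exactly $e^\gamma$ times that correction (with $\delta$ pulled out). So step (i) yields $|T(G)|/|G|\ge 1-2.287\,\delta/d(n)$.

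For (ii), I would bound $|U(G)|/|G|\le\sum_{p\in\PP_n}|U_p(G)|/|G|$ by a union bound, and then bound each $|U_p(G)|/|G|$. An element of $U_p(G)$ has $m_p(g)\ge1$ and $\sum_{k\ge1}m_{kp}(g)\ge2$, so it has at least two cycles whose lengths are multiples of $p$, one of which is a $p$-cycle. The natural estimate: the number of such elements in $\sym_n$ is at most $\binom{n}{p}\binom{n-p}{p}\cdot(\text{stuff})$ or, more cleanly, the probability that a uniform $g\in\sym_n$ has two prescribed-type cycles divisible by $p$ is $O(1/p^2)$ — indeed the ``two independent cycles'' heuristic (each cycle of length divisible by $p$ contributing a factor $\sim 1/p$) gives $|U_p(\sym_n)|/n!\lesssim C/p^2$ with an explicit $C$ depending mildly on $n$ (a factor like $\log n$ from summing $1/k$ over $k$ with $kp\le n$, and Remark~\ref{L:AnSn} contributing another $\delta$). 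Summing over $p\in\PP_n$ and applying Lemma~\ref{L2} with $a=a(n)$, $b=a(n)^{d(n)}$ gives $\sum_{p\in\PP_n}1/p^2\le \frac{2.22}{\lfloor a\rfloor\log\lfloor a\rfloor}-\frac{1.61}{\lfloor b\rfloor\log\lfloor b\rfloor}\le\frac{2.22}{\lfloor a\rfloor\log\lfloor a\rfloor}$; the $\log n$ (really $\log n-1$, from a harmonic-sum bound $\sum_{k\le n/p}1/k\le\log n$ or similar) and the $\delta$ factors then produce the two subtracted terms $\frac{2.22(\log n-1)}{\lfloor a\rfloor\log\lfloor a\rfloor}$ and the lower-order $\frac{4.4\delta\log n}{a(\log a)n}$ (the latter being the contribution of the $-1.61/(\lfloor b\rfloor\log\lfloor b\rfloor)$ term, or a boundary correction, re-expressed via $b\ge a$ and $b\le n$). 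Subtracting (i) and (ii) gives the stated bound.

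The main obstacle will be step (ii): getting a \emph{clean, explicit} upper bound on $|U_p(G)|/|G|$ of the shape $C(\delta)\,(\log n)/p^2$ with the right constant, since one must carefully count permutations with two cycles whose lengths are divisible by $p$. The cleanest route is to fix that one of the two cycles is the $p$-cycle guaranteed by $m_p(g)\ge1$ (since a pre-$p$-cycle already has $m_p=1$, and we are excluding those), then bound the probability that the \emph{remaining} $n-p$ points contain another cycle of length a multiple of $p$: this is at most $\sum_{k:\,kp\le n-p}\frac{1}{kp}\cdot(\text{density of }p'\text{-cycles among the rest})\le\frac1p\sum_{k\ge1,\,kp\le n}\frac1k\le\frac{\log n}{p}\cdot\frac1{p}$ after multiplying by the $\frac1p$ density of the initial $p$-cycle — but one must be careful that conditioning and the $\alt_n$ restriction (Remark~\ref{L:AnSn}) are handled rigorously rather than heuristically. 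Once the per-prime bound $|U_p(G)|/|G|\le \delta\cdot\frac{2\log n}{p^2}$ (say) is established, the rest is assembling the three error terms from Proposition~\ref{T1}, Lemma~\ref{L2}, Lemma~\ref{L3} and tracking constants, which is routine. I would present the argument as: union bound on $U$, per-prime cycle-counting estimate, Lemma~\ref{L2}; then the complement of $T$ via Proposition~\ref{T1} and Lemma~\ref{L3}; then combine.
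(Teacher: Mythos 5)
Your overall decomposition is the paper's, and your step (i) is exactly the paper's argument: Proposition~\ref{T1} bounds $1-|T(G)|/|G|$ by $\delta e^{\gamma-\mu}$, Lemma~\ref{L3} gives $\mu>\log d(n)-0.25$ (with $a\ge12$ the two correction terms sum to at most $\tfrac{1}{2(\log 12)^2}+\tfrac{1}{(\log 12)^2}<0.25$), and $e^{\gamma+0.25}<2.287$. That part is complete and correct.

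The gap is in step (ii), and it is not merely a matter of ``tracking constants''. First, your proposed per-prime bound $|U_p(G)|/|G|\le\delta\cdot 2(\log n)/p^2$, obtained by passing from $\alt_n$ to $\sym_n$ via Remark~\ref{L:AnSn}, would after summing over $p$ with Lemma~\ref{L2} produce a dominant term $\frac{2\delta\cdot 2.22\log n}{\lfloor a\rfloor\log\lfloor a\rfloor}$; the proposition claims $\frac{2.22(\log n-1)}{\lfloor a\rfloor\log\lfloor a\rfloor}$ with \emph{no} factor $\delta$, so your route proves a strictly weaker statement for $G=\alt_n$. The paper avoids the doubling by directly counting ordered triples $(g_1,g_2,g_3)$ with $g_1$ a $p$-cycle, $g_2$ a $kp$-cycle, and $g_3$ a permutation of the remaining $n-(k+1)p$ points chosen with the parity that makes $g_1g_2g_3$ even when $G=\alt_n$; this gives exactly $\frac{n!}{\delta}\cdot\frac{1}{kp^2}$ choices, hence $|U_p(G)|/|G|\le\sum_k \frac{1}{kp^2}$ with no extra $\delta$, and the harmonic sum is at most $1+\log\lfloor n/p\rfloor<\log n-\log p+1<\log n-1$ since $\log p>\log 12>2$ (this is the true source of the $\log n-1$). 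Second, that parity argument requires $n-(k+1)p\ge2$; the boundary case $(k+1)p\in\{n-1,n\}$ must be treated separately, and it --- not the $-1.61/(\lfloor b\rfloor\log\lfloor b\rfloor)$ term of Lemma~\ref{L2}, as you guessed --- is the source of the third error term $\frac{4.4\delta\log n}{a(\log a)n}$: each offending prime contributes at most $2\delta/(a(n-1))$, and at most $2\log_a n$ primes in $\PP_n$ can divide $n$ or $n-1$. Without identifying this special case and bounding the number of such primes, the $\alt_n$ count is incomplete and the stated constants are not reachable.
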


\begin{proof}
  Let $\mu=\sum_{p\in\PP_n} \frac{1}{p}$. Write $a$ and $d$ instead
  of $a(n)$ and $d(n)$, and set $b=a^d$. As $a<p\le b\le n$, we have
  $\PP_n\subseteq\{1,\dots,n\}$. Thus Proposition~\ref{T1} gives
\begin{equation}\label{E:Tnbound}
  \frac{|T(G)|}{|G|}=1-\textup{Prob}\left(g\in G\mid
  \textup{$m_{p}(g)=0$ for all $p\in\PP_n$}\right)
  \ge 1-\delta e^{\gamma-\mu}.
\end{equation}

We seek a lower bound for~\eqref{E:Tnbound}. This means
finding a lower bound for $\mu$.
Since $a\ge12$, Lemma~\ref{L3} gives
\begin{equation}\label{E:recip}
  \mu=\sum_{a<p\le a^d} \frac{1}{p}>\log d-\frac1{2(d\log a)^2}-\frac1{(\log a)^2}
  >\log d-0.25.
\end{equation}
Thus it follows from~\eqref{E:Tnbound} and~\eqref{E:recip} that
\begin{equation}\label{E:TG}
\frac{|T(G)|}{|G|} \ge 1-\frac{\delta e^\gamma}{e^\mu}
\ge 1-\frac{\delta e^\gamma}{e^{\log d-0.25}}\ge 1-\frac{2.287\delta}{d}.
\end{equation}

We seek an upper bound for $|U_{p}(G)|$. We (over)count the
number of permutations $g_1g_2g_3\in U_p(G)$ where $g_1,g_2,g_3$ have disjoint supports,
$g_1$ is $p$-cycle and $g_2$ is a $kp$-cycle for some $k\ge1$. 
We may choose $g_1$
in $\binom{n}{p}(p-1)!$ ways, because we take the first element
to be the smallest in the orbit.
Next we choose a $kp$-cycle in $\binom{n-p}{kp}(kp-1)!$ ways. Observe that
$g_1$ is even:
 $p$ is odd as $p>\log 12>2$. Thus when $G=\alt_n$ we must be able to choose
$g_3$ to have the same parity as $g_2$, so that $g_1g_2g_3$ is even.
  In the generic case when $(k+1)p\le n-2$ this is possible.
  The number of choices for $(g_1,g_2,g_3)$ in the generic case is
\[
\binom{n}{p}(p-1)!\binom{n-p}{kp}(kp-1)!\frac{(n-p-kp)!}{\delta}
=\frac{n!}{\delta}\frac{1}{kp^2}.
\]
This is an upper bound for the number of products $g_1g_2g_3$ in the
generic case, and we may halve this upper bound if $k=1$.

  Consider the special case when $n-1\le (k+1)p\le n$ has a solution for
  $k$ and~$p$.
Given $p$, this happens for at most one value of $k$, namely $k=(n-1)/p-1$ or
$k=n/p-1$, where $p$ divides $n$ or $n-1$, respectively. 
We will show that there are very few primes~$p$ for which
$(k+1)p$ lies in $\{n-1,n\}$ for some $k$.  Suppose that $(k+1)p=n-1$ and
  $n-1$ has $r$ (not necessarily distinct) primes divisors greater than $a$.
  Then $n-1>a^r$ and hence there are at most $r<\log_a(n-1)$ choices for~$p$.
  Similarly, if $(k+1)p=n$ there are less than $\log_a(n)$ choices for~$p$.
  Thus the special case has less than $2\log_a(n)$ choices for~$p$.
  Consequently, the contribution in the special case is small, and our
  estimations need not be so careful.

In this special case, arguing as above, the number of choices of
$(g_1, g_2)$ is $n!/(kp^2)$, and the number $n_3$ of choices for $g_3$ is $1$,
unless $G=\alt_n$ and $k$ is even, in which case $n_3=0$. Thus the number
of products $g_1g_2g_3$ is at most $n!/(kp^2)$ and we bound the denominator
as follows:
\[
  kp^2>kap\ge\frac{ka(n-1)}{k+1}\ge\frac{a(n-1)}{2}.
\]
In the special case, the number of choices for $g_1g_2g_3$ is at most
\[
\frac{n!}{kp^2}=\frac{\delta|G|}{kp^2}
\le\frac{2\delta|G|}{a(n-1)}=\eps|G|
\qquad\textup{where $\eps=\frac{2\delta}{a(n-1)}$.}
\]

For each prime $p$, let $\eps_p$ be $\eps$ if $n-1\le (k+1)p\le n$ has a solution for $k$, and 0 otherwise.
  In the generic case we have $(k+1)p\le n-2$ and
  $k\le m:=\lfloor (n-2)/p\rfloor-1$.
The bound $\sum_{k=1}^m 1/k<1+\int_1^m dt/t=1+\log m$ is problematic if $m=0$,
so we replace~$m$ with $m+1$ to get
\[
\frac{|U_{p}(G)|}{|G|}\le\left(\sum_{k=1}^{m+1}\frac{1}{kp^2}\right)
+\eps_p<\frac{1+\log(m+1)}{p^2}+\eps_p.
\]
However,
$1+\log(m+1)\le 1+\log\left(\left\lfloor n/p\right\rfloor\right)<\log n-1$
as $\log p>\log 12>2$. Therefore
\[
\frac{|U_{p}(G)|}{|G|}\le\frac{\log n-1}{p^2}+\eps_p.
\]
As $\eps_p=\eps\ne0$ for
less than $2\log_a(n)$ choices of $p$, we have
%%at most one prime $p$ in the range $a<p\le b$, we have
\[
\frac{|U(G)|}{|G|}\le\sum_{a<p\le b} \frac{|U_{p}(G)|}{|G|}
\le(\log n-1)\left(\sum_{a<p\le b} \frac{1}{p^2}\right)+2\eps\log_a(n).
\]
Applying the bound for $\sum_{a<p\le b} 1/p^2$
in Lemma~\ref{L2} gives
\[
  \frac{|U(G)|}{|G|}\le (\log n -1)\left(\frac{2.22}{\lfloor a\rfloor\log \lfloor a\rfloor}
  -\frac{1.61}{\lfloor b\rfloor\log \lfloor b\rfloor}\right)+
  \frac{4\delta\log_a(n)}{a(n-1)}.
\]
Since $12\le a<n$ and $4/(n-1)<4.4/n$ holds for $n\ge13$, we have
\begin{equation}\label{E:Un}
  \frac{|U(G)|}{|G|}\le (\log n -1)\left(\frac{2.22}{\lfloor a\rfloor\log \lfloor a\rfloor}
  -\frac{1.61}{\lfloor b\rfloor\log \lfloor b\rfloor}\right)+
  \frac{4.4\delta\log_a(n)}{an}. %\frac{4\delta}{n-4}.
%  \le \frac{2.22\delta(\log n-1)}{\lfloor a\rfloor\log \lfloor a\rfloor}.
%  -\frac{3.06\log n}{da^d\log a}.
\end{equation}

Now~\eqref{E:TG} and~\eqref{E:Un} give
\begin{equation}\label{E:lb}
  \rho_G=\frac{|T(G)|-|U(G)|}{|G|}\ge 1-\frac{2.287\delta}{d}-
  \frac{2.22(\log n -1)}{\lfloor a\rfloor\log \lfloor a\rfloor}
  -\frac{4.4\delta\log n}{a(\log a)n}.\qedhere
\end{equation}
\end{proof}

\begin{proof}[Proof of Theorem~\ref{T:one}]
  Set $a=\log n$ and $d=\log\log n$. Suppose that $n\ge e^{12}$.
  Then
  $a\ge12$, and also $a^d=(\log n)^{\log\log n}<n$.
  Using Proposition~\ref{P9} and the
  inequalities  $a-1<\lfloor a\rfloor$ and $4.4/\log a<2$ gives
%\[
%\rho_G> 1-\frac{2.287\delta}{d}-\frac{2.22}{\log\lfloor a\rfloor}
%> 1-\frac{2.287\delta}{d}-\frac{2.22}{\log(a-1)}.
%\]
\begin{align*}
\rho(G) &\ge
1 - \frac{2.287\delta}{d} - \frac{2.22(a-1)}{\floor{a}\log\floor{a}}
-\frac{4.4\delta a}{a(\log a)n}\\
&>
1 - \frac{2.287\delta}{d} - \frac{2.22}{\log(a-1)}-\frac{2\delta}{n}.
\end{align*}
However,
\[
\log(a-1)=\log a+\log\left(1-\frac 1a\right)\ge\log a
+\log\left(\frac{11}{12}\right)\ge c'\log a,
\]
where $c'=1-\log(12/11)/\log(12)=\log(11)/\log(12)$
  and $2/n<10^{-3}/\log\log n$, so
%%Therefore we conclude that
\[
  \rho_G>
  1-\frac{2.287\delta}{\log\log n}-\frac{2.22}{c'\log\log n}
    -\frac{0.001\delta}{\log\log n}
  \ge1-\frac{2.288\delta}{\log\log n}-\frac{2.301}{\log\log n}.
  \]
  This is at least $1-c/\log\log n$ where $c=4.6$ if $\delta=1$ and $c=6.9$
  if $\delta=2$.
%  This is at least $1-5/\log\log n$ when $\delta=1$, and at least
%  $1-7/\log\log n$ when $\delta=2$.
\end{proof}

Allowing pre-$p$-cycles with larger $p$ gives us a sharper lower bound for $\rho_G$.
%Allowing pre-$p$-cycles with $(\log n)^2<p\le n$ permits a sharper bound for $\rho_G$.

\begin{theorem}\label{T9}
Suppose that $n \ge e^{12}$ and $A_n\le G\le S_n$. Let $\rho_G$ be the
proportion of permutations in $G$ that power to a cycle with prime
length $p\le n-3$. Then
\[
  \rho_G\ge 1 - \frac{(4.58\delta+0.17)\log\log n}{\log (n-3)}\qquad\textup{where $\delta=\frac{n!}{|G|}\in\{1,2\}$.}%$ where $c = 5$ when
\]
\end{theorem}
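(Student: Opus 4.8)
The plan is to apply Proposition~\ref{P9} but with a much larger admissible interval of primes than in the proof of Theorem~\ref{T:one}: instead of capping the primes at $(\log n)^{\log\log n}$, I would push $b=a^d$ all the way up to roughly $n-3$, since pre-$p$-cycles with $p\le n-3$ are now allowed. Concretely I would take $a=a(n)$ still of size about $\log n$ (so that $a\ge 12$ and the estimating lemmas apply), but choose $d=d(n)$ so that $a^d$ is essentially $n-3$; equivalently $d=\log(n-3)/\log a$, which is of order $\log(n-3)/\log\log n$. The three error terms in Proposition~\ref{P9} then become: a term $2.287\delta/d$ of order $\delta\log\log n/\log(n-3)$; a term $2.22(\log n-1)/(\lfloor a\rfloor\log\lfloor a\rfloor)$, which with $a\approx\log n$ is of order $1/\log\log n$ — but this is a \emph{weaker} bound than what is claimed, so it must be absorbed differently (see below); and a term $4.4\delta\log n/(a(\log a)n)$ which is negligible. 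The claimed bound has shape $(\text{const}\cdot\delta+\text{const})\log\log n/\log(n-3)$, so I expect the $U$-side contribution — controlled by $\sum_{a<p\le b}1/p^2$ via Lemma~\ref{L2} — to be the dominant nontrivial piece, contributing the $0.17$-type constant, while the $T$-side ($2.287\delta/d$) contributes the $4.58\delta$-type constant.

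The reason the raw Proposition~\ref{P9} bound is not quite in the right form is the middle term: with $a\approx\log n$ the quantity $(\log n-1)/(\lfloor a\rfloor\log\lfloor a\rfloor)\approx 1/\log\log n$, which is of the \emph{wrong order} — it is larger than $\log\log n/\log(n-3)$. So I cannot simply plug $a\approx\log n$ into Proposition~\ref{P9}. Instead I would choose $a$ larger, balancing the two genuinely competing terms $2.287\delta/d$ (which wants $d$ large, hence $a$ small since $a^d\approx n$) against $2.22(\log n-1)/(\lfloor a\rfloor\log\lfloor a\rfloor)$ (which wants $a$ large). Setting these of comparable order forces $a$ of size roughly $\log n\cdot\log\log n$ (more precisely, $a$ such that $a\log a\approx (\log n)\log(n-3)/\log\log n$), which gives $\log a\approx\log\log n$ up to lower order, hence $d=\log(n-3)/\log a\approx\log(n-3)/\log\log n$, and both leading terms land at the desired order $\log\log n/\log(n-3)$. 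Tracking the constants through Lemma~\ref{L2} and~\eqref{E:recip} then yields the explicit $4.58\delta$ and $0.17$.

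Two points of care. First, Proposition~\ref{P9} requires $a^{d}\le n$, and here I want $b=a^d$ as close to $n-3$ as possible; I would take $b=n-3$ exactly (then $d=\log(n-3)/\log a$ need not be an integer, but nothing in the proof of Proposition~\ref{P9} or Lemmas~\ref{L2},~\ref{L3} assumed $d$ integral — they work with real $a\le b$), so the hypothesis $a^d=n-3\le n$ holds. Second, I must double-check that with my choice of $a$ the hypotheses $a\ge 12$ and $d>1$ hold for all $n\ge e^{12}$; since $a\gtrsim (\log n)\log\log n$ this is clear once $n$ is moderately large, and the threshold $n\ge e^{12}$ (already assumed) should suffice, possibly after noting $\log\log n>1$ there. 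The main obstacle is the constant bookkeeping: showing that the leading coefficients genuinely come out as $4.58\delta+0.17$ rather than something slightly worse requires using the sharper error terms in~\eqref{E:recip} (the $\frac1{2(d\log a)^2}+\frac1{(\log a)^2}<0.25$ estimate) and the precise form $2.22/(\lfloor a\rfloor\log\lfloor a\rfloor)$ from Lemma~\ref{L2}, together with $\lfloor a\rfloor>a-1$ and $\log(a-1)\ge c'\log a$ as in the proof of Theorem~\ref{T:one}; I expect the optimization of $a$ to be done by choosing $a$ so that $\lfloor a\rfloor\log\lfloor a\rfloor$ is a clean multiple of $(\log n)\log(n-3)/\log\log n$ and then verifying the resulting inequality holds for $n\ge e^{12}$.
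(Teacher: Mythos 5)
Your overall strategy---apply Proposition~\ref{P9} with $b=a^d=n-3$ and with $a$ enlarged beyond $\log n$ so that the middle term $2.22(\log n-1)/(\lfloor a\rfloor\log\lfloor a\rfloor)$ comes down to the right order---is exactly the paper's, and your diagnosis that $a=\log n$ fails is correct. But the choice of $a$ you settle on does not work, and the ``balancing'' heuristic cannot deliver the stated constants. With $a=\log n\cdot\log\log n$ one has $a\log a\sim\log n\,(\log\log n)^2$, so the middle term is of order $1/(\log\log n)^2$, which is \emph{not} $O(\log\log n/\log n)$ (the ratio grows like $\log n/(\log\log n)^3$); your equation $a\log a\approx(\log n)\log(n-3)/\log\log n$ is in fact solved by $a$ of order $(\log n)^2/(\log\log n)^2$, not by $\log n\log\log n$. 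More importantly, even at that true balance point the $U$-side term is about $2.22\log\log n/\log(n-3)$, i.e.\ it contributes a constant near $2.22$, not $0.17$; balancing the two competing terms therefore yields roughly $(4.58\delta+2.22)\log\log n/\log(n-3)$ rather than the claimed $(4.58\delta+0.17)\log\log n/\log(n-3)$.

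The paper instead takes $a=(\log n)^2$, well above the balance point. Then $d=\log(n-3)/(2\log\log n)$, so the $T$-side term is $2.287\delta/d=4.574\delta\log\log n/\log(n-3)$, while the middle term is of order $1/(\log n\,\log\log n)$, asymptotically negligible compared with $\log\log n/\log(n-3)$; it is then crudely absorbed into $0.17\log\log n/\log(n-3)$ using the threshold $n\ge e^{12}$ (via $2.22(\log n-1)/\lfloor(\log n)^2\rfloor<2.05/\log(n-3)$, $\log\lfloor(\log n)^2\rfloor\ge\log 144$, and $\log\log n\ge\log 12$). So the fix is not to balance the two error terms but to take $a$ a fixed power of $\log n$ strictly larger than the first, accepting the factor $2$ loss in $d$ in exchange for making the $U$-side contribution genuinely lower order.
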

\begin{proof}
  Set $a(n) = (\log n)^2$ in Proposition~\ref{P9} and suppose
  that $a(n)^{d(n)}=n-3$.
  (The hypotheses $a(n)\ge12$, $d(n)>1$ and $a^d\le n$ of Proposition~\ref{P9}
  clearly hold.)
  Then $d(n)=\log (n-3)/\log a= \log (n-3)/2\log\log n$ and Proposition~\ref{P9} gives
  \begin{align*}
    \rho_G&\ge 1 - \frac{2.287\delta}{d(n)}
         -\frac{2.22(\log n-1)}{\lfloor(\log n)^2\rfloor\log\lfloor(\log n)^2\rfloor}-
         \frac{4.4\delta\log n}{a(\log a)n}\\
    &>1 - \frac{4.574\delta\log\log n}{\log(n-3)}-
\frac{2.22(\log n-1)}{\lfloor(\log n)^2\rfloor\log\lfloor(\log n)^2\rfloor}-
\frac{2.2\delta}{\log (n-3)(\log\log n)n}.
  \end{align*}
  Note that
  $\lfloor(\log n)^2\rfloor> (\log n)^2-1=(\log n-1)(\log n+1)$ and
  since $n\ge e^{12}$ we have
\[
\frac{2.22(\log n-1)}{\lfloor(\log n)^2\rfloor}<\frac{2.22}{\log n +1}
<\frac{2.05}{\log(n-3)}
\qquad\textup{and}\qquad
\frac{2.2\delta}{(\log\log n)n}\le \frac{\delta\log\log n}{10^3}.
\]
Using these inequalities, and combining the $\delta$ terms, shows that
\[
\rho_G\ge 1 - \frac{4.575\delta\log\log n}{\log (n-3)}
- \frac{2.05}{\log(n-3)\log(12^2)}
\ge 1 - \frac{(4.58\delta + 0.17)\log\log n}{\log (n-3)}.\qedhere
\]
\end{proof}

%% \begin{theorem}\label{T9}
%% Suppose that $n \ge e^{12}$ and $A_n\le G\le S_n$. Let $\rho_G$ be the
%% proportion of permutations in $G$ that power to a cycle with prime length.
%% Then
%% \[
%%   \rho_G\ge 1 - \frac{(4.6\delta+0.2)\log\log n}{\log n}\qquad\textup{where $\delta=\frac{n!}{|G|}\in\{1,2\}$.}%$ where $c = 5$ when
%% \]
%% \end{theorem}
%% \begin{proof}
%% Take $a = (\log n)^2$ and $d = \log n/2\log\log n$ in Proposition~\ref{P9},
%% and note that $a^d = n$ is as large as possible. Then Proposition~\ref{P9} gives
%% \[
%% \rho_G\ge 1 - \frac{4.574\delta\log\log n}{\log n}-
%% \frac{2.22(\log n-1)}{\lfloor(\log n)^2\rfloor\log\lfloor(\log n)^2\rfloor}-
%% \frac{4.4\delta}{(\log n)(2\log\log n)n}.
%% \]
%%  Note that
%%   $\lfloor(\log n)^2\rfloor> (\log n)^2-1=(\log n-1)(\log n+1)$ and
%%   since $n\ge e^{12}$ we have
%% \[
%% \frac{2.22(\log n-1)}{\lfloor(\log n)^2\rfloor}<\frac{2.22}{\log n +1}
%% <\frac{2.22}{\log n}
%% \qquad\textup{and}\qquad
%% \frac{4.4\delta}{(2\log\log n)n}\le \frac{\delta\log\log n}{10^3}.
%% \]
%% Using these inequalities, and combining the $\delta$ terms, shows that
%% %%$\rho_G$ is at least
%% %%Using $n\ge e^{12}$ and $\log n \ge 12$, gives the inequalities 
%% %%$\lfloor(\log n)^2\rfloor\ge (\log n)^2-\log n$ and
%% %%$(\log n)(n-1)^2>2000n$. Hence $\rho_G$ is at least
%% \[
%% \rho_G\ge 1 - \frac{4.575\delta\log\log n}{\log n}
%% - \frac{2.22}{\log n\log(12^2)}
%% \ge 1 - \frac{(4.575\delta + 0.18)\log\log n}{\log n}.\qedhere
%% \]
%% \end{proof}

\begin{remark}\label{R2}
  Suppose that $n\ge5$. We prove that the proportion $\pi_n$ of
  elements of $\sym_n$ that are pre-$p$-cycles for some~$p$
  with $2\le p\le n-3$ is at least $1/19$. We know that
  $\pi_n\ge\pi_0$ where $\pi_0:=\sum_{n/2<p\le n-3}1/p$. A simple computation
  with {\sc Magma}~\cite{Magma} shows that $\pi_0\ge 1/19$ for all $n$
  satisfying $5\le n\le 400{,}000$.
  For $n>400{,}000>e^{12}$ we have $\pi_n\ge 1-4.75\log\log n/\log(n-3)>1/19$
  by Theorem~\ref{T9}. Precise computations of $\pi_n$ for $n \le 50$ suggest
  that $\pi_n>1/3$ may even hold for all $n\ge5$.
\end{remark}

\vskip2mm{\sc Acknowledgement.} 
We thank the referee for some very helpful suggestions.

\end{document}